\newtheorem{lemma}{Lemma}[section]
\newtheorem{theorem}[lemma]{Theorem}
\newtheorem{proposition}[lemma]{Proposition}
\newtheorem{remark}[lemma]{Remark}
\def\charf {\mbox{{\text 1}\kern-.24em {\text l}}}
\newcommand{\R}{\mathbb{R}}
\newcommand{\del}{\partial}
\newcommand{\bu}{\bar u}
\newcommand{\btheta}{\bar \theta}
\newcommand{\bs}{\bar \sigma}
\newcommand{\hu}{\hat u}
\newcommand{\htheta}{\hat \theta}
\newcommand{\hs}{\hat \sigma}
\newcommand{\fhu}{\hat u}
\newcommand{\fhtheta}{\hat \theta}
\newcommand{\hU}{\hat U}
\newcommand{\hTheta}{\hat \Theta}
\newcommand{\hS}{\hat \Sigma}
\newcommand{\bU}{\bar U}
\newcommand{\bTheta}{\bar \Theta}
\newcommand{\bS}{\bar \Sigma}
\newcommand{\tU}{\tilde U}
\newcommand{\tTheta}{\tilde \Theta}
\newcommand{\tS}{\tilde \Sigma}
\newcommand{\ii}{\mathrm{i}}
\newenvironment{system*}{\begin{equation*}\left\lbrace\begin{aligned}}{\end{aligned}\right.\end{equation*}}
\title{Emergence of coherent localized structures in shear deformations of temperature dependent fluids $^*$}
 \thanks{$^*$Research partially supported by the EU FP7-REGPOT project 
``Archimedes Center for
Modeling, Analysis and Computation" and the ``DIKICOMA"  project of the 
Hellenic Secretariat of Research and Technology.
Part of this work was completed at the Department of Applied Mathematics, University of Crete, Greece.}
\author{Theodoros Katsaounis$^{1,}$$^2$}
\author{Julier Olivier$^3$}
\author{Athanasios E. Tzavaras$^{1,}$$^2$}
\address{$^1$ Computer, Electrical, Mathematical Sciences \& Engineering Division, King Abdullah University of Science and Technology (KAUST), Thuwal, Saudi Arabia.
Email: {\tt athanasios.tzavaras@kaust.edu.sa} }
\address{$^2$ Institute of Applied and Computational Mathematics, FORTH, Heraklion 71110, Greece}
\address{$^3$ Centre de Math\'ematiques et Informatique, Aix-Marseille Universit\'e, 39 rue F. Joliot Curie, 13453 Marseille Cedex 13, France.
Email: {\tt julien.olivier@univ-amu.fr}}
\begin{document}

\date{}
\numberwithin{equation}{section}


\begin{abstract}
Shear localization occurs in various instances of material instability in solid mechanics and is typically
associated with  Hadamard-instability for an underlying model.
While Hadamard instability indicates the catastrophic growth of oscillations around a mean state, 
it does not by itself explain the formation of coherent structures typically observed in localization. The latter
is a nonlinear effect and its analysis is the main objective of this article. 
We consider a model that captures the main mechanisms observed
in  high strain-rate deformation of metals,
and describes shear motions of temperature dependent non-Newtonian fluids.
For a special dependence of the viscosity on the temperature, we carry out a linearized stability 
analysis around a base state of uniform shearing solutions, 
and quantitatively assess  the effects of the various mechanisms affecting the problem: 
thermal softening, momentum diffusion  and thermal diffusion. Then, we turn to the nonlinear model, 
and construct localized states - in the form of similarity solutions -  that emerge as coherent structures 
in the localization process. This justifies  a scenario
for localization that is proposed on the basis of asymptotic analysis in \cite{KT}.
\end{abstract}

\maketitle


\section{Introduction}
\label{intro}

The phenomenon of shear localization during high strain-rate deformations of metals \cite{ZH,CDHS,HDH}  
is a striking instance of material instability in mechanics, that has attracted considerable attention 
in the mechanics literature ({\it e.g.} \cite{CDHS,FM,MC,WW,Wr}) as well as
a number of mathematical studies   (\cite{DH,Tzavaras87, BPV,WrOc,Tzavaras92,DO,KT} and references therein).
Material instability is typically associated with ill-posedness  of an underlying initial value problem, 
what has coined the term  \emph{Hadamard instability} for its description 
in the mechanics literature. 
It should however be emphasized that while Hadamard instability indicates the catastrophic growth 
of oscillations around a mean state, what is observed in localization is the orderly albeit extremely fast
development of coherent structures: the shear bands. 
The latter is a nonlinear phenomenon and a linearized analysis can only serve as  a precursor
to such behavior.  

We  work with the simplest model capturing the mechanism of shear band formation,  a
simple shear motion of a non-Newtonian fluid with temperature decreasing viscosity $\mu' (\theta) < 0$,
\begin{equation}
  \label{introshear} 
  \begin{aligned}
    & v_{t} = \sigma_{x},\\
    & \theta_{t} = \kappa \theta_{ x x}  +  \sigma v_x, \\
    & \sigma  = \mu (\theta) v_x^n \, ,
  \end{aligned}
\end{equation}
where  $n > 0$ describes the strain-rate sensitivity  which  typically in 
these problems  satisfies $n \ll 1$.
Mathematical studies of \eqref{shear},  for power law  viscosities $\mu (\theta)  = \theta^{- \alpha}$, $\alpha > 0$, 
show that the uniform shear is  asymptotically stable for $n > \alpha$  \cite{DH, Tzavaras87},
but indicate unstable response in the complementary region $n < \alpha$,
\cite{Tzavaras87, BPV, Tzavaras92}.  The mathematical analysis has not so far characterized
the precise behavior  in the instability region; however, detailed and resolved numerical studies 
({\it e.g.} \cite{WW, ELW, BKT}) 
show that instability is followed by formation of shear bands.

The difficulty is that in order to analyze the unstable regime one needs to account for the combined
effect of parabolic regularizations in conjunction to ill-posed initial value problems.
An asymptotic theory for calculating an effective equation for the compound effect  is proposed in \cite{KT, KT2}.  
It is based on the theory of relaxation systems and the Chapman-Enskog expansion 
and produces an effective asymptotic equation. For a power law the effective equation
changes type from forward to backward parabolic, across the threshold $-\alpha + n = 0$ of the parameter space. 
The criteria of \cite{KT, KT2} are based on asymptotic analysis and as such are formal in nature.

Our objective, here, is to develop a mathematical theory for the onset of localisation and the emergence of a nonlinear
localized state, thus providing a rigorous justification to the behavior conjectured  by the effective equation in \cite{KT}. 
We employ the constitutive hypothesis of an  {\it exponential law} for the viscosity coefficient, 
\begin{equation}
\label{explaw}
\mu (\theta)  = e^{- \alpha \theta} \, , \; \; \alpha > 0  \quad \mbox{and hence}  \quad   \sigma  =  e^{- \alpha \theta} \, v_x^n  \, .
\tag{H$_{exp}$}
\end{equation}
The model \eqref{shear} for the exponential law \eqref{explaw} admits a special class of solutions 
describing uniform shearing
\begin{equation} \label{ARUSS}
\begin{aligned}
& v_s (x) = x, \\
& \theta_s (t) = \frac{1}{\alpha} \log{\left(\alpha t + c_0\right)}, \quad   c_0 = e^{\alpha\theta_0},\\
& \sigma_s (t)  =  \frac{1}{\alpha t + c_0} \, .
\end{aligned}
\end{equation}
The graph $\sigma_s - t$  describing the average stress vs. average strain response is monotonically decreasing, and thus
the deformation of uniform shear is characterized by strain softening response.
We set the following goals:
\begin{enumerate}
\renewcommand{\theenumi}{\alph{enumi}}
\item[(i)] To study the linearized problem around the uniform shearing 
solution and to assess the effects of the 
various parameters of the problem
\item[(ii)]  To inquire whether there are special solutions of the nonlinear 
problem that indicate the emergence of localized deformations and formation of shear bands.
\end{enumerate}

Since the base solution \eqref{ARUSS} around which we linearize is time-dependent the linearized analysis necessarily 
involves analysis of non-autonomous systems and also perturbations have to be compared against the base solution.
We perform a linearization using ideas of relative perturbations 
\begin{equation}
\label{introlinpert}
\begin{aligned}
u := v_x &= 1 + \delta \bu (x,t)  + O(\delta^2)  ,
\\
\theta
&=  \theta_s (t)  + \delta \btheta (x,t)  + O (\delta^2) ,
\\
\sigma 
&= \sigma_s (t)   + \delta  \sigma_s (t) \bs (x,t)  + O (\delta^2) ,
\end{aligned}
\end{equation}
leading to linearized system
\begin{equation}
\label{introlinrelexp}
\begin{cases}
\bu_t &= \sigma_s  (t) \, \big ( n \bu_{xx} - \alpha \btheta_{xx} \big ),
\\
\btheta_t - \kappa \btheta_{xx} &= \sigma_s  (t) \,  \big ( (n+1) \bu - \alpha \btheta \big ),
\\
\bu_x (0,t) = \bu_x (1,t) &= 0 \, ,
\\
\btheta_x (0,t) = \btheta_x (1,t) &= 0  \, .
\end{cases}
\end{equation}
with $\bs$ computed by $\bs = n \bu - \alpha \btheta$. This problem turns out to have the following behavior:

\begin{enumerate}
\renewcommand{\theenumi}{\alph{enumi}}
\item \label{en:introa}For the case without thermal diffusion $\kappa =0 $:

When $n = 0$ the high frequency modes grow exponentially fast with 
the exponent of the order of the frequency
what is characteristic of ill-posedness and \emph{ Hadamard instability}. 

When $n > 0$ the modes are still unstable but the rate of growth is 
bounded independently of the frequency.
In fact, the behavior in that range is characteristic of \emph{Turing 
instability}.

\item The effect of thermal diffusion ($\kappa > 0$) for the 
non-autonomous linearized model \eqref{introlinrelexp} can also be assessed: 
Perturbations do grow initially in the early stages of deformation,  but over time 
the effect of the diffusion
intensifies and eventually the system gets stabilized. For a quantitative description
of the above qualitative statement we refer to lemmas \ref{lemdecay} and \ref{lembound}. In addition, this behavior
can also occur for the full solution of the nonlinear problem \eqref{introshear} subject to simple shear boundary
conditions as indicated in numerical simulations presented in Figure \ref{mtstb}. This type of metastable response was already conjectured 
(for a somewhat different model) in numerical experiments of \cite{WW, Wa}. The linearized analysis presented here
offers a theoretical explanation of this interesting behavior.
\end{enumerate}

In the second part of this article we consider the adiabatic variant of the model
\eqref{introshear} with exponential constitutive law \eqref{explaw}. The system is expressed for $u=v_x$ 
in the form
\begin{equation}
  \label{introadiashear} 
  \begin{aligned}
    & u_{t} = \del_{xx} \left (  e^{-\alpha \theta} u^n  \right ), 
    \\
    & \theta_{t} =  \ e^{-\alpha \theta} u^{n+1},
    \end{aligned}
\end{equation}
and is now considered on the whole space $(x,t) \in \R \times [0,\infty)$.
For the system \eqref{introadiashear}, we construct a class of localizing solutions 
\begin{equation}
\label{localizsol}
\begin{aligned}
u(x,t) &= \phi_\lambda (t) \, U_\lambda \big ( \sqrt{\lambda} x \phi_\lambda (t) \big ),
\\[2pt]
\sigma (x,t) &= \sigma_s (t) \frac{1}{\phi_\lambda (t)}  \,  \Sigma_\lambda \big ( \sqrt{\lambda} x \phi_\lambda (t) \big ),
\\[2pt]
\theta(x,t) &= \theta_s(t) + \lambda \tfrac{n+1}{\alpha} (\theta_s(t) - \theta_0) + \Theta_\lambda \big ( \sqrt{\lambda} x \phi_\lambda (t) \big ),
\end{aligned}
\end{equation}
where $\displaystyle{\phi_\lambda (t)  = \big  ( \tfrac{\alpha}{c_0} t + 1 \big )^{ \tfrac{\lambda}{\alpha}}}$ depends on the parameter  $\lambda > 0$,
and \eqref{localizsol} is determined  via the solution $(U_\lambda, \Sigma_\lambda, \Theta_\lambda)$
of the singular initial value problem \eqref{msys}, \eqref{mic} with $\nu = \lambda$. 
The existence of the solution $(U_\lambda, \Sigma_\lambda, \Theta_\lambda)$ is established in Theorem \ref{connection} where 
 most importantly its behavior at the origin and at infinity is made precise.
 The emerging solution \eqref{localizsol} depends on three parameters: $\theta_0$ linked to the uniform shear in \eqref{ARUSS},
 a parameter $\lambda$ which can be viewed as a length scale of initial data, and a parameter  $\Sigma_0$ determining the size of the initial 
 profile. 
  
 To our knowledge this is the first instance that the compound effect of Hadamard instability and 
 parabolic regularizations is  mathematically analyzed in a nonlinear context. The reader should contrast the nonlinear response captured by the
 exact solution \eqref{localizsol} to the response of the associated linearized problem \eqref{introlinrelexp}.
The solutions \eqref{localizsol} indicate that the competition among  Hadamard instability 
and  momentum diffusion in a nonlinear context can lead to a nonlinear coherent structure that localizes  in a narrow band (see Theorem \ref{shearband}
and Remark \ref{rmkbeh}).
By contrast, for a linear model, the combined effect of Hadamard instability and parabolicity leads to unstable oscillatory 
modes growing in amplitude  (see \eqref{unstlinear}). This justifies the 
scenario  that nonlinearity can kill the oscillations caused by
Hadamard instability so that the non-uniformity of deformation produced by 
the instability  merges into a single localized zone.
It would be interesting to explore this type of response  for other models related to material instability in solid mechanics.

The exposition is organized as follows: Section \ref{secdescription} presents a description of the problem and the formulation via relative perturbations.
The linearized analysis is presented in section \ref{STABAR}, first the part that can be done via spectral analysis for autonomous systems, 
and then the part concerning the effect of thermal diffusion, which is based on  energy estimates for a non-autonomous linear system. Detailed asymptotics for
the eigenvalues and the connection with Hadamard and Turing instabilities are presented there. The presentation of the localized solutions
is split in  sections \ref{seclocal}-\ref{seclocalII}: Section \ref{seclocal} presents the invariance properties, the {\it ansatz} of localized
solutions, and also introduces the relevant auxiliary problem \eqref{msys}, \eqref{mic}. The construction of solutions 
to  the singular initial-value problem \eqref{msys}, \eqref{mic}
and the proof of their properties is the centerpiece of our analysis and forms the objective of section \ref{secss}. The construction is based on a series of steps,
involving desingularization, a nonlinear transformation, and eventually the existence of a heteroclinic orbit via the Poincar{\'e}-Bendixson theorem.
The construction of the localized solutions is then effected in section \ref{seclocalII}.


\section{Modeling of Shear Bands - Relative perturbations}
\label{secdescription}

In this work we  study the onset of localization and formation of shear bands for 
the model \eqref{introshear} describing shear motions of a non-Newtonian fluid with viscosity
 $\mu (\theta)$ decreasing with temperature.
For concreteness we take a viscosity satisfying
the exponential law \eqref{explaw} and are interested in the range where the rate sensitivity $n$ is positive and small.
The model \eqref{introshear} then becomes
\begin{equation}
  \label{shear} 
  \begin{aligned}
    & v_{t} = \sigma_{x},\\
    & \theta_{t} = \kappa \theta_{ x x}  +  \sigma v_x, \\
    & \sigma  = e^{-\alpha \theta} v_x^n \, .
  \end{aligned}
\end{equation}
and describes shear motions between two parallel plates located at $x=0$ and $x=1$
with $v$ the velocity in the shear direction, $\sigma$ the shear stress,  $\theta$ the temperature, and
\begin{equation}
\label{defsr}
u = v_x,
\end{equation}
depicting the strain rate. 
The boundary conditions are taken
\begin{alignat}{2}
\label{vbc}
v(0,t) &= 0 \, , \quad v(1,t) &&= 1 \, ,
\\
\label{abc}
\theta_x (0,t) &= 0 \, , \quad \theta_x (1,t) &&= 0 \, .
\end{alignat}
Equation \eqref{vbc} reflects prescribed velocities at the  boundaries, while \eqref{abc} manifests
that the bounding plates are adiabatic.
Note that  \eqref{vbc} implies that  the stress satisfies 
\begin{equation}
\label{sbc}
\sigma_x (0,t) = \sigma_x (1,t) = 0 \, ,
\end{equation}
and that
\begin{equation}
\label{ave}
\int_0^1 u(x,t) dx = 1 \, .
\end{equation}
The initial conditions are
\begin{equation}
\begin{aligned}
v(x,0) &= v_0 (x) \, ,
\\
\theta(x,0) &= \theta_0 (x) \, .
\end{aligned}
\end{equation}
Throughout the study we will consider two cases:  When $\kappa = 0$ the process is adiabatic and the resulting model will be the
main object of study, as it is the simplest model proposed for studying the phenomenon of formation of shear bands.
The case $\kappa > 0$ will also be studied in section \ref{STABAR} as a paradigm to assess  the effect of thermal diffusion.

\subsection{Uniform shearing solutions}

The problem \eqref{shear}, \eqref{vbc}, \eqref{abc} admits a class of special solutions describing uniform shear
\begin{align} 
\label{uss}
\begin{aligned}
& v_s = x, \\
& u_s = \del_x v_s = 1, \\
& \theta_s= \theta_s (t), \\
& \sigma_s =  \sigma_s (t) =   \mu (\theta_s (t)),
\end{aligned}
\\
\label{usseq}
\begin{aligned}
\mbox{with $\theta_s$ defined by} \; \;  &\left \{ \; 
\begin{aligned}
\frac{d \theta_s}{dt} &=  \mu (\theta_s) = \sigma_s, \\
\theta_s(0) &= \theta_0.
\end{aligned}
\right . 
\end{aligned}
\end{align}
For the special case of the exponential law \eqref{explaw}, $\theta_s$ and $\sigma_s$ read
\begin{equation} \label{expuss}
\begin{aligned}
& \theta_s= \frac{1}{\alpha} \log{ ( \alpha t + c_0 )}, \quad   c_0 = e^{\alpha\theta_0},\\
& \sigma_s =  \frac{1}{\alpha t + c_0}.
\end{aligned}
\end{equation}
These are universal solutions, {\it i.e.} they hold for all values of the parameters $n$, $\alpha$, $\kappa$ in the model
\eqref{shear} including the limiting elliptic initial value problem when  $n=0$, $\kappa = 0$. 
The graph $\sigma - t$  is viewed as describing the stress vs. average strain response. 
As the graph is  decreasing, it means that  there is always softening response in the course of the deformation.

\subsection{Description of the phenomenon of shear bands}

The  system \eqref{shear} has served as a paradigm for the mathematical study of the phenomenon of shear bands
({\it e.g.} \cite{DH, Tzavaras87, Tzavaras91, BPV}) occurring during the high strain-rate deformations of metals. 
Formation of shear bands is a well known material instability that usually leads to rupture, and has received considerable
attention in the mechanics literature ({\it e.g.} \cite{CDHS,FM,MC,WW,Wr}). The reader is referred to
\cite{KT2} for a description of the problem intended for a mathematically oriented reader that is briefly also outlined below.

It was recognized by Zener and Hollomon \cite{ZH} (see also \cite{SC}) that the effect of the deformation speed is twofold:
(a) To change the deformation conditions from isothermal to nearly adiabatic; under nearly adiabatic conditions the combined
effect of thermal softening and strain hardening tends to produce net softening response. (b) Strain rate has an effect
{\it per se} and induces momentum diffusion that needs to be included in the constitutive
modeling. A theory of thermoviscoplasticity is commonly used to model shear bands and we refer to \cite{SC,Tzavaras87,WW,WrOc,KT} for
various accounts of the modeling aspects.

The basic mechanism for localization is encoded in the behavior of the following simple system
of differential equations
\begin{equation}
  \label{basicsys} 
  \begin{aligned}
    & v_{t} =   \del_x  \big ( \mu (\theta) v_x^n ) ,\\
    & \theta_{t} =   \sigma v_x  = \mu (\theta) v_x^{n+1}, \\
      \end{aligned}
\end{equation}
(which is the adiabatic case $\kappa = 0$ of \eqref{shear}). This model is appropriate for a non-Newtonian fluid
with temperature dependent viscosity, and is also a simplification of the models proposed  in \cite{ZH, SC} to capture
the mechanism for shear band formation in high strain-rate plastic deformations of metals. The argument - adapted to the language of a temperature dependent fluid -
goes as follows: 
as the fluid gets sheared the dissipation produces heat and elevates the 
temperature of the fluid.
The viscosity decreases with the elevation of the temperature and the 
fluid becomes softer and easier to shear.
One eventuality is that the shear deformation distributes uniformly across the material in the
same manner as it does for the usual Newtonian fluids. This arrangement is described by the uniform shearing
solution \eqref{expuss}.  A second mode of response  is suggested when comparing the behavior of the fluid in two spots:
one hot and one cold. Since the viscosity is weaker in the hot spot  and stronger in the cold spots, the amount of 
shear generated in the hot spot will be larger than the shear generated in the cold spot 
and in turn the temperature difference will be intensified.
It is then conceivable that the deformation localizes into a narrow band 
where the material is considerably hotter (and weaker)
 than the surrounding environment. The two possibilities are depicted in Fig. \ref{ShearFlow}.
 \begin{figure}
\centering
\vspace{-0.1cm}
\includegraphics[scale=0.4]{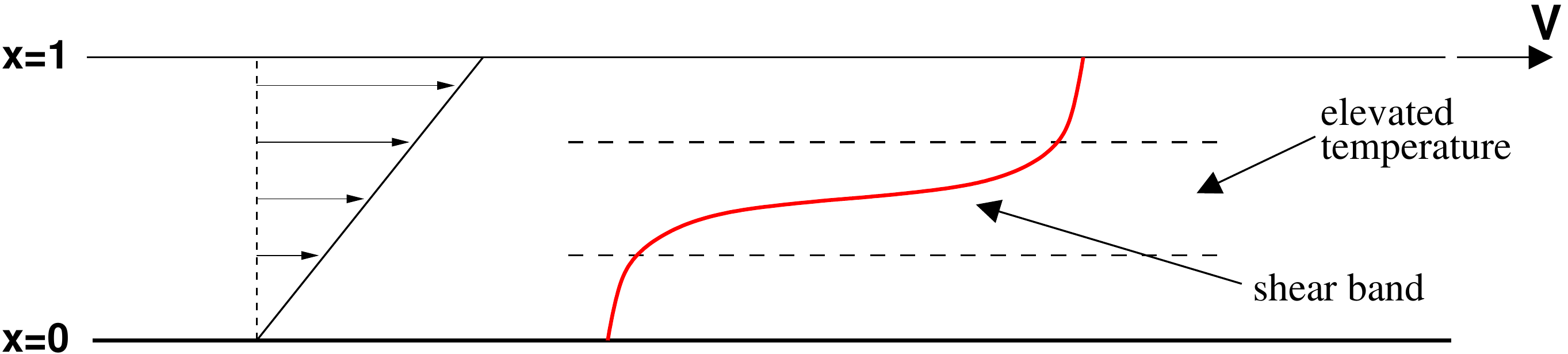}
\vspace{0.1cm}
\caption{Uniform shear versus shear band.}
\label{ShearFlow}
\end{figure}
The outcome of the competition of thermal softening and momentum diffusion
is captured by the behavior of solutions to \eqref{basicsys}.

A series of works analyze the system  \eqref{basicsys} mostly for  power law viscosities 
\begin{equation}
\label{powerlaw}
\mu (\theta)  = \theta^{- \alpha} \, , \; \; \alpha > 0  \quad \mbox{and hence}  \quad   \sigma  =  \theta^{- \alpha} \, v_x^n  \, .
\tag{H$_{pl}$}
\end{equation}
These studies  show stable response  \cite{DH, Tzavaras87} when $-\alpha + n >0$, and indicate unstable response 
\cite{Tzavaras87, BPV, Tzavaras92} but without characterizing the behavior in the complementary region $-\alpha + n < 0$.
More precisely, in the unstable regime Tzavaras \cite{Tzavaras87} shows that development of shear bands can be induced by stress
boundary conditions leading to finite-time blow-up, and that their occurrence is associated with a collapse of stress diffusion across the band.
On the other hand, Bertsch, Peletier and Verduyn-Lunel \cite{BPV} show that if the problem has global solutions and there is a sufficiently
large initial temperature perturbation, then a shear band appears as the asymptotic in time state.
Careful and detailed numerical investigations \cite{WW, ELW, BKT} indicate that instability is widespread in the parameter regime 
$-\alpha + n < 0$ and is typically  followed by the development of shear bands.

The behavior in the ``instability domain" is at present poorly understood, 
especially regarding the precise mechanism of  formation
of the shear band.  The reason is  the response in this regime results from the competition
between an ill-posed problem and a parabolic regularization, and little
mathematical theory is available to study such effects. Nevertheless, an asymptotic theory similar to the Chapman-Enskog expansion
is used by Katsaounis-Tzavaras \cite{KT, KT2} to calculate an
effective equation for the compound effect  of thermal softening and strain-rate sensitivity. The effective equation changes type 
from forward to backward parabolic across the instability threshold, and thus provides an asymptotic criterion for
the onset of localization. In the present work,  we will develop a mathematical framework for studying the competition 
and for examining the conjecture of \cite{KT}.  Our analysis is facilitated by 
the use of the exponential law  \eqref{explaw} inducing  special scaling properties on the resulting system of partial differential equations.


\subsection{Hadamard instability}

In the mechanics literature shear localization is typically associated with  Hadamard-instability. 
This is easily put on display for \eqref{shear}, 
by considering the limiting case $n=0$ and $\kappa = 0$. In the parlance of mechanics, this corresponds to rate-insensitive 
deformations and absence of heat conduction, and leads to the system of conservation laws
\begin{equation}
  \label{hadamard}
  \begin{aligned}
 \del_t 
 \begin{pmatrix}
 v \\ \theta
 \end{pmatrix}
 =
 \begin{pmatrix}
  ( \mu (\theta) )_{x} \\  \mu(\theta)  v_x 
 \end{pmatrix}
 =
    \begin{pmatrix}
      0 &
      \mu'(\theta)
       \\
       \mu(\theta)  & 
	0
    \end{pmatrix}
    \del_x
     \begin{pmatrix}
 v \\ \theta
 \end{pmatrix} 
 .
  \end{aligned}
\end{equation}
For $\mu'(\theta) < 0$ the wave speeds are both imaginary, $\lambda_{\pm} = \pm  \ii \sqrt{ |\mu'| \mu}$, and the 
linearized initial value problem  for \eqref{hadamard} is ill-posed.
Two questions then emerge: 
\begin{enumerate}
\renewcommand{\theenumi}{\roman{enumi}}
\item
 To assess the combined effect of  the competition between  Hadamard 
instability and
 momentum diffusion  and/or  heat conduction (as manifested by small $n > 0$ and $\kappa > 0$ respectively). 
 \item
While Hadamard instability indicates the catastrophic growth of oscillations around a mean state, 
it does not by itself explain the formation of coherent structures typically observed in localization. The latter is
an outcome of the nonlinear effects and has to be assessed at the level of the nonlinear problem.
\end{enumerate}

\subsection{Formulation via relative perturbations}
\label{secrelpert}

The first objective is to develop quantitative criteria for the stability of the uniform shearing solutions. 
Since the basic solutions are time dependent an issue arises how to define their stability. 
Following \cite{MC,Tzavaras92}, 
we  call the uniform shearing solution {\it asymptotically stable} if perturbations of the
solution decay faster than the basic solution, and we call it {\it unstable} if they grow faster than the uniform shearing solution.

It is expedient to introduce a relative perturbation formulation of the problem:
For the {\it exponential law} \eqref{explaw}, we introduce the state $(U, \Theta, \Sigma)$ describing the relative perturbation
and defined through the transformation
\begin{equation}
\label{exptran}
\begin{aligned}
u(x,t) &= U(x,t),
\\
\theta (x,t) &= \theta_s (t)  + \Theta (x,t),
\\
\sigma (x,t) &= \sigma_s (t) \Sigma (x,t),
\end{aligned}
\end{equation}
where $(\theta_s, \sigma_s)$ is given by \eqref{expuss} and satisfies in particular that
\begin{equation}
\label{difequsexp}
\dot\theta_s = \sigma_s = e^{-\alpha \theta_s} \, .
\end{equation}
Using \eqref{shear} and 
\eqref{difequsexp} we derive the system satisfied by the relative perturbation $(U, \Theta, \Sigma)$ in the form
\begin{equation}
\label{relexp}
\begin{aligned}
U_t &= \sigma_s (t) \,  \Sigma_{xx},
\\[2pt]
\Theta_t - \kappa \Theta_{xx} &= \sigma_s (t)   \, \big ( \Sigma \, U -1  \big ),
\\[2pt]
\Sigma &= e^{-\alpha \Theta} U^n.
\end{aligned}
\end{equation}
Note that the uniform shearing solution is mapped to the state $U_0 =1$, $\Theta_0 = 0$ and $\Sigma_0 =1$, and the latter is
an equilibrium for the system \eqref{relexp}.   The problem
then becomes to study the stability of the equilibrium $(U_0, \Theta_0, \Sigma_0) = (1,0,1)$ for the non-autonomous system \eqref{relexp}.

%
%
%
%
%

\section{Linearized analysis of the exponential model}
\label{STABAR}

In this section we present an analysis of the linearized system. Since the uniform shearing solution is
time-dependent the linearized analysis will seek to classify the precise growth in-time of perturbations.

\subsection{The linearized problem}

We compute the linearized problem, using  the ansatz
\begin{equation}
\label{linexp}
\begin{aligned}
U &=  1 + \delta \bu + O(\delta^2),
\\
\Theta &= \delta \btheta + O(\delta^2),
\\
\Sigma &= 1 + \delta \bs + O(\delta^2),
\end{aligned}
\end{equation}
where $\delta$ is a small parameter measuring the size of the perturbation and 
$\delta \ll 1$. 
Expansion of the  equation \eqref{relexp}$_3$ gives to the leading order
of $\delta$
$$
1 + \delta \bs = 1 + (-\alpha \btheta + n \bu)  \delta + O(\delta^2),
$$
while linearizing the other two equations gives
$$
\begin{aligned}
\bu_t &= \sigma_s (t) \, \bs_{xx} + O(\delta^2),
\\
\btheta_t - \kappa \btheta_{xx} &= \sigma_s (t) \,  (\bs + \bu) + O(\delta^2) \, .
\end{aligned}
$$
Collecting the equations together and neglecting the terms of $O(\delta^2)$ we obtain the linearized system:
\begin{equation}
\label{linrelexp}
\left \{ \;
\begin{aligned}
\bu_t &= \sigma_s  (t) \, \big ( n \bu_{xx} - \alpha \btheta_{xx} \big ),
\\[2pt]
\btheta_t - \kappa \btheta_{xx} &= \sigma_s  (t) \,  \big ( (n+1) \bu - \alpha \btheta \big ),
\\[2pt]
\bs &= n \bu - \alpha \btheta .
\end{aligned}
\right .
\end{equation}
The boundary conditions \eqref{abc} and \eqref{sbc} and the transformations \eqref{exptran} and 
\eqref{linexp}, imply that the induced boundary conditions for the linearized system are
\begin{equation}
\label{linbc}
\left \{ \;
\begin{aligned}
\bu_x (0,t) = \bu_x (1,t) &= 0 \, ,
\\
\btheta_x (0.t) = \btheta_x (1,t) &= 0  \, .
\end{aligned}
\right .
\end{equation}

In view of the transformations \eqref{relexp} and \eqref{linexp}, the 
relation between the original
solution $(u , \sigma, \theta)$ and the solution of the linear problem \eqref{linrelexp} is as follows
\begin{equation}
\label{linpert1}
\begin{aligned}
u = v_x &= 1 + \delta \bu (x,t)  + O(\delta^2) , 
\\
\theta
&=\frac{1}{\alpha} \log ( \alpha t + c_0 ) + \delta \btheta (x,t) + O (\delta^2),
\\
\sigma 
&= \frac{1}{\alpha t + c_0 }\big ( 1  + \delta \bs (x,t) + O (\delta^2) \big ) .
\end{aligned}
\end{equation}

\subsection{A change of time scale}
\label{secchatime}

Our analysis of the linearized equation is facilitated by
introducing the transformation of variables
\begin{equation}
\label{hlin}
\begin{aligned}
\bu (x,t) &= \hu (x, \tau (t) ),
\\
\btheta (x,t) &= \htheta (x, \tau(t) ),
\\
\bs (x,t) &= \hs (x, \tau(t)),
\end{aligned}
\end{equation}
where $\tau(t)$ is a rescaling of time, defined by
\begin{equation*}
\left \{
\begin{aligned}
\frac{d\tau}{dt} &= \sigma_s(t)=\frac{1}{\alpha t + c_0}
\\
\tau(0) &= 0
\end{aligned}
\right .
\qquad
\implies  \quad \tau(t)=\frac{1}{\alpha}\log{\left( \frac{c_0 + \alpha  t }{c_0} \right)}.
\end{equation*}
The map $\tau(t)$ is invertible and its inverse is given by
$$
t = \hat t (\tau) = \frac{c_0}{\alpha} \left ( e^{\alpha \tau} -1 \right ) \, .
$$
One computes that $(\hu(x,\tau), \htheta (x,\tau) , \hs (x,\tau) )$ satisfy the linear system
\begin{equation}
\label{linreltau}
\left \{ \;
\begin{aligned}
\frac{\del \hu}{\del \tau} &= \big ( n \hu_{xx} - \alpha \htheta_{xx} \big ),
\\[2pt]
\frac{\del \htheta}{\del \tau}   - \kappa c_0 e^{\alpha \tau}  \htheta_{xx} &=  (n+1) \hu - \alpha \htheta ,
\\[2pt]
\hs &= n \hu - \alpha \htheta ,
\end{aligned}
\right .
\end{equation}
with boundary conditions
\begin{equation}
\label{linrelbc}
\left \{ \;
\begin{aligned}
\hu_x (0,t) = \hu_x (1,t) &= 0 \, ,
\\
\htheta_x (0.t) = \htheta_x (1,t) &= 0  \, .
\end{aligned}
\right .
\end{equation}

For further reference, we record that the original variables $(u, \sigma, \theta)$ and the
solution $(\hu , \hs, \htheta)$ of the rescaled problem \eqref{linreltau}-\eqref{linrelbc}
are related through the formulas
\begin{equation}
\label{linpert2}
\begin{aligned}
u = v_x &= 1 + \delta \hu (x, \tau(t)) + O(\delta^2) , 
\\
\theta
&=\frac{1}{\alpha} \log ( \alpha t + c_0 ) + \delta \htheta (x, \tau(t)) + O (\delta^2),
\\
\sigma 
&= \frac{1}{\alpha t + c_0 }\big ( 1  + \delta \hs (x, \tau(t)) + O (\delta^2) \big ) .
\end{aligned}
\end{equation}

\subsection{Linearized analysis with frozen coefficients}
\label{seclinfroz}

We next consider the system 
\begin{equation}
\label{linref}
\left \{ \;
\begin{aligned}
\frac{\del \hu}{\del \tau} &= \big ( n \hu_{xx} - \alpha \htheta_{xx} \big ) ,
\\[2pt]
\frac{\del \htheta}{\del \tau}   - k \htheta_{xx} &=  (n+1) \hu - \alpha \htheta ,
\end{aligned}
\right .
\end{equation}
with boundary conditions \eqref{linrelbc}. Note that in  \eqref{linreltau} the thermal diffusion coefficient
$k = k(\tau) = \kappa c_0 e^{\alpha \tau}$ is time dependent, while  in \eqref{linref} this coefficient
is frozen in time. We analyze the solutions of \eqref{linref} via an  eigenvalue analysis. This provides the 
following information:
\begin{enumerate}
\renewcommand{\theenumi}{\alph{enumi}}
\item For  $k =0$ it provides the stability or instability properties  
of the linearized system \eqref{linreltau}.
\item For $k > 0$ the eigenvalue analysis  will only offer an 
indication of the effect of thermal diffusion
on the linearized analysis. This will be complemented by an energy estimate analysis of the exact system
 \eqref{linrelexp} in the following section.
 \end{enumerate}

The solutions of \eqref{linref}-\eqref{linrelbc} are expressed as a cosine Fourier series
\begin{equation}
\label{fs}
\begin{aligned}
\hu (x,\tau) &= \sum_{j =0}^\infty \fhu_j (\tau) \, \cos j \pi x ,
\\
\htheta(x,t) &= \sum_{j =0}^\infty \fhtheta_j (\tau) \, \cos j \pi x ,
\end{aligned}
\end{equation}
where the Fourier coefficients $(\fhu_j (\tau), \fhtheta_j (\tau))$ satisfy the  ordinary differential equations
\begin{equation}
\label{syscoef}
\begin{aligned}
\frac{d}{dt}
\begin{pmatrix} 
\fhu_j \\ 
\fhtheta_j
\end{pmatrix}
&=
\begin{pmatrix}
0  & 0 \\
0 & - k (j \pi)^2
\end{pmatrix}
\begin{pmatrix} 
\fhu_j \\ 
\fhtheta_j
\end{pmatrix}
+
\begin{pmatrix}
- n (j \pi)^2  & \alpha (j \pi)^2 \\
n+1 & - \alpha 
\end{pmatrix}
\begin{pmatrix} 
\fhu_j \\ 
\fhtheta_j
\end{pmatrix}
\\
&=
\begin{pmatrix}
- n (j \pi)^2  & \alpha (j \pi)^2 \\
n+1 & - \alpha - k (j \pi)^2
\end{pmatrix}
\begin{pmatrix} 
\fhu_j \\ 
\fhtheta_j
\end{pmatrix}
\end{aligned}
\end{equation}

The eigenvalues of the matrix 
$$
A : = \begin{pmatrix}
- n (j \pi)^2  & \alpha (j \pi)^2 \\
n+1 & - \alpha - k (j \pi)^2
\end{pmatrix} ,
$$
are the roots of the binomial equation
\begin{equation}
\label{binomial}
\lambda^2 + \lambda \big ( \alpha + n (j \pi)^2 + k (j \pi)^2 \big ) + n k (j \pi)^4 - \alpha (j \pi)^2 = 0  \, , 
\quad j = 0, 1, 2, ...
\end{equation}

\bigskip
\noindent
{\bf The mode $j=0$}. For $j= 0$ the equation becomes $\lambda^2 + \alpha \lambda = 0$ and has two eigenvalues
$\lambda_{0, 0} = 0$ and $\lambda_{0, -} = -\alpha$. This corresponds to marginal stability for the linearized system. 
However, any instability associated with the eigenvalue $\lambda_{0, 0} = 0$ is eliminated by \eqref{defsr}, \eqref{vbc} which imply
for the perturbation $\hat u$ in \eqref{linpert2} the condition $\int_0^1 \bu (x, 0) dx = 0$.
Indeed, the system  \eqref{syscoef} for $j =0$ takes the form
$$
\begin{aligned}
\frac{d \fhu_0 }{d\tau} &= 0 ,
\\
\frac{d \fhtheta_0 }{d\tau} &=  (n+1) \fhu_0 - \alpha \fhtheta_0 . 
\end{aligned}
$$
This implies, using  \eqref{fs}, \eqref{linpert1}, \eqref{defsr} and the boundary condition \eqref{vbc}, that
$$
\fhu_0 ( \tau ) = \fhu_0 (0) = \int_0^1 \bu (x, 0) dx = 0.
$$
and clearly $\fhtheta_0 (\tau)$ decays. 
Thus, the $0$-th mode decays.

\bigskip
\noindent
{\bf The modes $j= 1, 2, ... $}.  For these modes the eigenvalues are
\begin{equation}
\label{eigenvj}
\lambda_{j, \pm} =  - \tfrac{1}{2} \big ( \alpha + n (j \pi)^2 + k (j \pi)^2 \big ) \pm \tfrac{1}{2} \sqrt{ \Delta_j }\,  ,
\end{equation}
where the discriminant
$$
\begin{aligned}
\Delta_j &: = \big ( (n + k) (j \pi)^2 + \alpha \big )^2 + 4 \alpha (j \pi)^2 - 4 n k (j \pi)^4
\\
&= (n-k)^2 (j \pi)^4 + \big ( 2 \alpha (n+k) + 4\alpha \big ) (j \pi)^2 + \alpha^2  > 0
\end{aligned}
$$
Since 
$$
\lambda_{j , + } + \lambda_{j , -} = - (n+k) (j \pi)^2 -\alpha < 0 \, , \quad 
\lambda_{j , + } \lambda_{j , -}  = n k (j \pi)^4 - \alpha (j \pi)^2 ,
$$
we conclude that
\begin{equation}
\begin{aligned}
\mbox{ the $j$-th mode is asymptotically stable} \quad &\mbox{iff} \quad nk (j \pi)^2 > \alpha ,
\\
\mbox{ the $j$-th mode is unstable} \quad &\mbox{iff} \quad nk (j \pi)^2 < \alpha .
\end{aligned}
\end{equation}
In summary:
\begin{enumerate}
\renewcommand{\theenumi}{\roman{enumi}}
\item If $k = 0$ then all modes $j=1,2, ...$ are unstable.
\item If $k > 0$ the first few modes might be unstable, that is the 
modes $j$ for which $n k (j \pi)^2 < \alpha$,
but the high frequency modes are stable.
\item The higher $k$ the fewer the number of unstable modes. For $k$ 
 larger than a certain threshold (here equal to $\alpha/(n\pi^2)$)
all modes are asymptotically stable.
\end{enumerate}

\subsection{Asymptotics of the modes and the nature of the instability}

We analyze the behavior of the linearized modes as a function of $j$ and discuss the nature of
the instability in the various special cases of interest. The analysis reveals the role of the parameters $n, k$ 
regarding the nature of the instability.

\subsubsection{ Case 1: $k =0$, $n = 0$, Hadamard instability}

In this regime 
$$
\lambda_{j, \pm} = - \frac{\alpha}{2} \pm \tfrac{1}{2} \sqrt{ \alpha^2 + 4 \alpha (j \pi)^2 } ,
$$
one eigenvalue is  negative and the other strictly positive and increasing with $j$. 
Using the Taylor series development
\begin{equation}
\label{sqtaylor}
\sqrt{ 1 + x} = 1 + \tfrac{1}{2} x - \tfrac{1}{4} x^2 + \tfrac{3}{8} x^3 + ...
\, , \quad \mbox{for $0 < x \ll 1$} ,
\end{equation}
we obtain the following asymptotic developments for the eigenvalues
\begin{equation}
\label{asym1}
\begin{aligned}
\lambda_{j, -} &= - \sqrt{\alpha} j \pi  - \tfrac{\alpha}{2}  - \tfrac{1}{8} \frac{\alpha^{\frac{3}{2}} }{j \pi} + O (\tfrac{1}{j^3} ) ,
\\
\lambda_{j, +} &=  \sqrt{\alpha} j \pi  - \tfrac{\alpha}{2}  + \tfrac{1}{8} \frac{\alpha^{\frac{3}{2}} }{j \pi} + O (\tfrac{1}{j^3} ) \, .
\end{aligned}
\end{equation}

Referring to the formulas \eqref{linpert2}, \eqref{hlin} and \eqref{asym1}, we see that the $j$-th mode of the perturbation grows to the leading
order like
$$
\begin{aligned}
\btheta_j (x,t) = \cos (j \pi x) \, e^{\lambda_{ j, +} \tau(t) }  
&\sim
\cos (j \pi x)   \exp \Big \{ \sqrt{\alpha} j \pi \frac{1}{\alpha}\log{\left( \frac{c_0 + \alpha  t }{c_0} \right)} \Big \}
\\
&=  \cos (j \pi x)  \Big ( \frac{c_0 + \alpha t}{c_0} \Big )^{\tfrac{j\pi}{\sqrt{\alpha}}}\, .
\end{aligned}
$$
Clearly the perturbation grows much faster than the uniform shearing 
solution $\theta_s(t)$ given by \eqref{expuss} where the growth is 
logarithmic,
and the behavior is characteristic of {\it Hadamard instability}, that is the high frequency oscillations  grow at a catastrophic rate.

\subsubsection{Case 2: $k = 0$, $n > 0$, Turing instability}

In this case the eigenvalues of the $j$-th mode are given by the formulas
$$
\lambda_{j, \pm} = - \frac{\alpha + n (j \pi)^2 }{2} \pm \tfrac{1}{2}  (\alpha + n (j \pi)^2 ) \sqrt{ 1 + \frac{ 4 \alpha (j \pi)^2}{  (\alpha + n (j \pi)^2 )^2   } } .
$$
Again we have one negative and one positive eigenvalue and thus unstable 
response in this regime as well.
Using once again the Taylor expansion \eqref{sqtaylor} we obtain the asymptotic expansions   
\begin{equation}
\label{asym2}
\begin{aligned}
\lambda_{j, -} &= - n ( j \pi)^2 - \alpha -   \frac{\alpha (j\pi)^2}{\alpha + n (j\pi)^2} + O (\tfrac{1}{j^2} ) ,
\\
\lambda_{ j, +} &=   \frac{\alpha (j\pi)^2}{\alpha + n (j\pi)^2} + O (\tfrac{1}{j^2} ) \, .
\end{aligned}
\end{equation}
for the positive and negative eigenvalues respectively. 

\begin{lemma}
The positive eigenvalue $\lambda_{j, +}$ satisfies  the following properties:
\begin{align}
\label{prop1}
&\mbox{ $\lambda_{j, +}$ is increasing in $j$} ,
\\
\label{prop2}
&\lambda_{j, +} < \tfrac{\alpha}{n} ,
\\
\label{prop3}
&\lambda_{j, +}  \to \tfrac{\alpha}{n} \quad \mbox{as $j \to \infty$} \ .
\end{align}
\end{lemma}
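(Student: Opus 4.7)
My plan is to treat $\lambda_{j,+}$ as a smooth function of the continuous parameter $y = (j\pi)^2 > 0$ and exploit the quadratic relation \eqref{binomial} (with $k=0$), namely
\begin{equation*}
\lambda^2 + \lambda\bigl(\alpha + n y\bigr) - \alpha y = 0,
\end{equation*}
which I rearrange as $\lambda^2 + \alpha\lambda = y(\alpha - n\lambda)$. Since the product of the roots equals $-\alpha y < 0$, we have $\lambda_+(y) > 0$ for every $y > 0$, and the relation above is equivalent, away from $\lambda = \alpha/n$, to
\begin{equation*}
y \;=\; g(\lambda_+) \;:=\; \frac{\lambda_+(\lambda_+ + \alpha)}{\alpha - n\lambda_+}.
\end{equation*}

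From this identity, property \eqref{prop2} is immediate: both $y > 0$ and $\lambda_+(\lambda_++\alpha) > 0$, so the denominator $\alpha - n\lambda_+$ must be positive, giving $\lambda_+ < \alpha/n$. For property \eqref{prop3}, I observe that $g$ is continuous on $[0,\alpha/n)$ with $g(0) = 0$ and $g(\lambda) \to +\infty$ as $\lambda \to (\alpha/n)^-$; therefore the image of $\lambda_+ \mapsto g(\lambda_+)$ already fills $[0,\infty)$, and since $\lambda_+(y)$ is bounded by $\alpha/n$, the only possibility is $\lambda_+(y) \to \alpha/n$ as $y \to \infty$.

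For the monotonicity assertion \eqref{prop1}, I differentiate $g$ explicitly. A short computation gives
\begin{equation*}
g'(\lambda) \;=\; \frac{\alpha^2 + 2\alpha\lambda - n\lambda^2}{(\alpha - n\lambda)^2},
\end{equation*}
and on the interval $0 < \lambda < \alpha/n$ one has $n\lambda^2 < \alpha\lambda < \alpha\lambda + \alpha^2 + \alpha\lambda$, so the numerator is strictly positive. Hence $g$ is a strictly increasing bijection from $[0, \alpha/n)$ onto $[0, \infty)$, and its inverse $y \mapsto \lambda_+(y)$ is strictly increasing. Since $y = (j\pi)^2$ is increasing in $j$, we conclude that $\lambda_{j,+}$ is increasing in $j$. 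The main (minor) obstacle is simply bookkeeping the sign of $g'$, which reduces to the elementary inequality $n\lambda^2 < \alpha^2 + 2\alpha\lambda$ valid on $(0, \alpha/n)$; no deeper analytic input is required, and an alternative route via implicit differentiation of the quadratic relation (yielding $\frac{d\lambda_+}{dy} = \frac{\alpha - n\lambda_+}{2\lambda_+ + \alpha + ny} > 0$) would work equally well.
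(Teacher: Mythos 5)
Your proof is correct, and it takes a genuinely different (and arguably more self-contained) route than the paper. The paper differentiates the quadratic identity implicitly, exactly as in the ``alternative route'' you mention at the end, obtaining $\partial\lambda_+/\partial x = (\alpha - n\lambda_+)/\sqrt{(nx+\alpha)^2 + 4\alpha x}$; it then establishes $\lambda_+ < \alpha/n$ separately by applying the elementary bound $\sqrt{1+z} < 1 + \tfrac{1}{2}z$ to the explicit radical formula for $\lambda_+$, which simultaneously gives \eqref{prop2} and the positivity needed for \eqref{prop1}; finally it derives \eqref{prop3} from the asymptotic expansion \eqref{asym2} computed earlier in the text. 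You instead invert the quadratic relation, writing $y = g(\lambda_+) = \lambda_+(\lambda_+ + \alpha)/(\alpha - n\lambda_+)$, which makes \eqref{prop2} an immediate sign observation, yields \eqref{prop3} from the fact that $g$ is a continuous increasing bijection of $[0,\alpha/n)$ onto $[0,\infty)$, and yields \eqref{prop1} by differentiating $g$ directly (your computation $g'(\lambda) = (\alpha^2 + 2\alpha\lambda - n\lambda^2)/(\alpha - n\lambda)^2$ and the positivity argument $n\lambda^2 < \alpha\lambda < \alpha^2 + 2\alpha\lambda$ on $(0,\alpha/n)$ both check out). The inversion buys a cleaner logical structure in which all three properties flow from a single monotone bijection, with no appeal to the radical estimate or to the external asymptotic expansion; the paper's version is marginally shorter because \eqref{asym2} was already in hand.
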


\begin{proof}
Set $x = (j \pi)^2$ and recall that 
$$
\lambda_+ (x) = - \frac{\alpha + n x}{2} + \tfrac{1}{2} \sqrt{ (nx+\alpha)^2 + 4 \alpha x }\, ,
$$
satisfies the identity
\begin{equation}
\label{idin}
\lambda_+^2 (x) + \lambda_+ (x)  (n x + \alpha) - \alpha x = 0 \, .
\end{equation}
Differentiating \eqref{idin}, we derive
$$
\frac{\del \lambda_+}{\del x} (x) 
=  \frac{ \alpha - \lambda_+ (x) n }{ 2 \lambda_+ (x)  +  (n x + \alpha)}
=  \frac{ \alpha - \lambda_+ (x) n }{ \sqrt{ (n x+\alpha)^2 + 4 \alpha x } } \, .
$$
Next, using the elementary bound $\sqrt{1 + z} < 1 +\tfrac{1}{2} z$ for $z > 0$ we obtain
$$
\begin{aligned}
\lambda_+ (x) &= - \frac{\alpha + n x}{2} + \tfrac{1}{2}  (n x + \alpha)  \sqrt{  1 + \frac{4\alpha x}{ (n x+\alpha)^2 } }
< \frac{\alpha x }{ n x + \alpha} < \frac{\alpha}{n} .
\end{aligned}
$$
This shows that $\frac{\del \lambda_+}{\del x} (x)  > 0$ and provides \eqref{prop1} and \eqref{prop2}. Then \eqref{prop3} follows
from \eqref{asym2}.
\end{proof}

The following remarks are in order: Observe first that, by virtue of \eqref{linpert2}, \eqref{hlin} and \eqref{asym1}, 
the $j$-th mode of the perturbation grows like
\begin{equation}
\label{unstlinear}
\begin{aligned}
\btheta_j (x,t) = \cos (j \pi x) \, e^{\lambda_{j, +} \tau(t) }  
&=
\cos (j \pi x)   \exp \Big \{  \left(    \frac{\alpha (j\pi)^2}{\alpha + n (j\pi)^2} + O (\tfrac{1}{j^2} )     \right )   \tfrac{1}{\alpha}\log{\left( \frac{c_0 + \alpha  t }{c_0} \right)} \Big \}
\\
&\sim  \cos (j \pi x)  \Big ( \frac{c_0 + \alpha t}{c_0} \Big )^{\tfrac{1}{n}  }\ .
\end{aligned}
\end{equation}
Clearly the perturbation grows much faster than the uniform shearing solution $\theta_s(t)$, but the rate of growth is bounded in $j$.
On the other hand, the growth becomes very fast as the rate sensitivity $n \to 0$.

The behavior in this regime is characterized by {\it Turing instability}. Recall, that Turing instability  \cite{Turing}
corresponds to the following scenario that occurs in some instances of dynamical systems. There are two dynamical systems
$$
\dot x = A x \, , \quad \dot x = B x , 
$$
where the spectrum of both  matrices $A$ and $B$ lies in the left-hand 
plane, but  the Trotter
product of the two systems
$$
\dot x = (A + B) x ,
$$
exhibits unstable response.

Indeed,  the dynamical system \eqref{syscoef} with $k = 0$ may be visualized
as the Trotter product of the two systems
\begin{equation}
\label{syscoef1}
\begin{aligned}
\frac{d}{dt}
\begin{pmatrix} 
\fhu_j \\ 
\fhtheta_j
\end{pmatrix}
=
\begin{pmatrix}
- \tfrac{1}{2} n (j \pi)^2  & \alpha (j \pi)^2 \\
0 & - \tfrac{1}{2} \alpha 
\end{pmatrix}
\begin{pmatrix} 
\fhu_j \\ 
\fhtheta_j
\end{pmatrix}
\end{aligned}
\end{equation}
and
\begin{equation}
\label{syscoef2}
\begin{aligned}
\frac{d}{dt}
\begin{pmatrix} 
\fhu_j \\ 
\fhtheta_j
\end{pmatrix}
=
\begin{pmatrix}
- \tfrac{1}{2} n (j \pi)^2  & 0 \\
n+1  & - \tfrac{1}{2} \alpha 
\end{pmatrix}
\begin{pmatrix} 
\fhu_j \\ 
\fhtheta_j
\end{pmatrix}
 .
\end{aligned}
\end{equation}
Each of \eqref{syscoef1} and \eqref{syscoef2} has spectrum in the left half space but the eigenvalues of \eqref{syscoef}
have unstable response.

\subsubsection{Case 3: $k > 0$, $n > 0$, thermal diffusion stabilizes the high modes}

In this regime the eigenvalues are given by \eqref{eigenvj} and they are real and have the properties
\begin{align}
\label{prope1}
&\lambda_{j, -}  < 0 \quad \forall j , 
\\
\label{prope2}
&\lambda_{j, +} < 0  \quad  \Longleftrightarrow  \quad k (j \pi)^2 > \tfrac{\alpha}{n} .
\end{align}
Therefore, the high modes are stable, but the low modes might be unstable if $k$ is sufficiently small.
The ``most unstable" mode is the first mode.
One can again carry out the large $j$ asymptotics of the eigenvalues. The result, in the case
that $n > k$,  reads
\begin{equation}
\label{asym3}
\begin{aligned}
\lambda_{j, -}  &= - n ( j \pi)^2 - \alpha -   \frac{\alpha (k+1)}{ \big |  n - k + \tfrac{\alpha}{ (j\pi)^2 } \big | }  + O (\tfrac{1}{j^2} ) \, ,
\\
\lambda_{j, +} &=  -  k ( j \pi)^2 +  \frac{\alpha (k+1)}{ \big |  n - k + \tfrac{\alpha}{ (j\pi)^2 } \big | }    + O (\tfrac{1}{j^2} ) \, .
\end{aligned}
\end{equation}
A similar formula holds when $n < k$.

\subsection{Effect of thermal diffusion on the linearized problem}

Consider now the linearized system \eqref{linrelexp} with boundary condition $\eqref{linbc}$. Since we are interested in
perturbations of the uniform shearing solution we will consider initial data that satisfy $\int_0^1 u_0 dx =0$ and thus focus
on solutions with
\begin{equation}
\label{average}
\int_0^1 u(x,t) dx = 0 .
\end{equation}
The objective is now to analyze the effect of the time-dependent diffusion on the dynamics of the system.
The linearized analysis of the system with frozen coefficients suggests to expect that diffusion stabilizes the system and its
effect is intensified with the passage of time. Indeed, here we provide a rigorous proof of this statement via energy estimates.

\begin{lemma} 
\label{lemdecay}
There exist $A> 0$ and $T > 0$ such that
\begin{equation}
\label{bound1}
\int_0^1 \big ( \tfrac{A}{2} \bu^2  + \tfrac{1}{2} \btheta^2 \big )(x,t) \, dx \le  \int_0^1 \left ( \tfrac{A}{2} \bu^2  + \tfrac{1}{2} \btheta^2 \right ) (x,T) \, dx
\quad \mbox{for $t \ge T$}
\end{equation}
and
\begin{equation}
\label{bound2}
\int_0^1 \left ( \tfrac{A}{2} \bu^2  + \tfrac{1}{2} \btheta^2 \right )(x,t) dx \to 0 \quad \mbox{as $t \to \infty$}.
\end{equation}
\end{lemma}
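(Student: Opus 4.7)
The plan is to build a weighted energy $E(t) := \int_0^1\big(\tfrac{A}{2}\bu^2 + \tfrac{1}{2}\btheta^2\big)(x,t)\,dx$ and, after choosing $A$ and $T$ appropriately, to establish a differential inequality of the form $E'(t) \le -\beta\sigma_s(t)E(t)$ on $[T,\infty)$. Since $\sigma_s \ge 0$, this yields monotone decrease of $E$ on $[T,\infty)$, giving \eqref{bound1}; since $\sigma_s(s) = 1/(\alpha s + c_0)$ is not integrable at $+\infty$, Gr\"onwall then forces $E(t) \to 0$, which is \eqref{bound2}.

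The starting identity is obtained by multiplying the $\bu$-equation of \eqref{linrelexp} by $A\bu$ and the $\btheta$-equation by $\btheta$, integrating on $[0,1]$ in $x$, and integrating by parts using the Neumann conditions \eqref{linbc}:
\begin{equation*}
\frac{dE}{dt} = -An\sigma_s(t)\!\int_0^1\!\bu_x^2\,dx + A\alpha\sigma_s(t)\!\int_0^1\!\bu_x\btheta_x\,dx - \kappa\!\int_0^1\!\btheta_x^2\,dx + \sigma_s(t)\!\int_0^1\!\big((n+1)\bu\btheta - \alpha\btheta^2\big)\,dx.
\end{equation*}
Three contributions are dangerous: the sign-indefinite cross term $\bu_x\btheta_x$ (arising from the thermal--mechanical coupling), the $\bu\btheta$ cross term in the heat equation, and the structural asymmetry that the good $\bu_x^2$ dissipation carries the vanishing weight $\sigma_s(t)$ whereas the good $\btheta_x^2$ dissipation keeps the fixed coefficient $\kappa$.

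Next, absorb the cross terms via Young's inequality. Writing $A\alpha\sigma_s\bu_x\btheta_x \le \tfrac{An\sigma_s}{2}\bu_x^2 + \tfrac{A\alpha^2\sigma_s}{2n}\btheta_x^2$ swallows half of the good $\bu_x^2$ term and produces a bad $\btheta_x^2$ term whose coefficient $\tfrac{A\alpha^2\sigma_s(t)}{2n}$ decays to $0$, hence is dominated by $-\kappa$ once $t \ge T_0$ for some $T_0 = T_0(A)$. For the heat-equation cross term, $(n+1)\bu\btheta \le \tfrac{(n+1)^2}{2\alpha}\bu^2 + \tfrac{\alpha}{2}\btheta^2$; the zero-mean hypothesis \eqref{average} together with \eqref{linbc} permits the Poincar\'e inequality $\int_0^1 \bu^2\,dx \le \tfrac{1}{\pi^2}\int_0^1 \bu_x^2\,dx$, converting the bad $\bu^2$ piece into an $\bu_x^2$ piece of coefficient $\tfrac{(n+1)^2\sigma_s(t)}{2\alpha\pi^2}$. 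This is absorbed by the remaining half of the good $\bu_x^2$ dissipation provided $A > \tfrac{(n+1)^2}{n\alpha\pi^2}$. Fixing such $A$ and then $T \ge T_0$, one obtains for $t \ge T$
\begin{equation*}
\frac{dE}{dt}(t) \le -c_1\sigma_s(t)\!\int_0^1\!\bu_x^2\,dx - c_2\!\int_0^1\!\btheta_x^2\,dx - \tfrac{\alpha}{2}\sigma_s(t)\!\int_0^1\!\btheta^2\,dx
\end{equation*}
for positive constants $c_1, c_2$. Non-positivity of the right-hand side proves \eqref{bound1}; another application of Poincar\'e on $\bu$ (and discarding the $\btheta_x^2$ term) turns this into $E'(t) \le -\beta\sigma_s(t)E(t)$ with $\beta := \min\big(\tfrac{2c_1\pi^2}{A},\alpha\big)>0$, and integrating gives $E(t) \le E(T)\big(\tfrac{\alpha T + c_0}{\alpha t + c_0}\big)^{\beta/\alpha} \to 0$, which is \eqref{bound2}.

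The main obstacle is the tension in the cross-term absorption: the good $\bu_x^2$ dissipation is weighted by the vanishing $\sigma_s(t)$ and yet must beat both the $\bu_x\btheta_x$ coupling (on equal footing, being itself $\sigma_s$-weighted) \emph{and}, after Poincar\'e, the $\bu\btheta$ cross term of coefficient $\tfrac{(n+1)^2}{2\alpha\pi^2}$ which does not carry the $\sigma_s$ weight---it is the latter that forces $A$ to be chosen sufficiently large \emph{before} any choice of $T$ can close the estimate. The zero-mean property \eqref{average} is indispensable here: without it, Poincar\'e fails on $\bu$ and the only remaining $\bu$-control comes from $\sigma_s\int \bu_x^2$, which vanishes too quickly to dominate the $\bu\btheta$ term.
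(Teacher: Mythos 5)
Your argument matches the paper's proof essentially step by step: same weighted energy $E(t)=\int_0^1(\tfrac{A}{2}\bu^2+\tfrac12\btheta^2)\,dx$, same integration-by-parts identities using the Neumann conditions, same Young/Cauchy--Schwarz absorption of the two cross terms $\bu_x\btheta_x$ and $\bu\btheta$, same Poincar\'e step on $\bu$ enabled by \eqref{average} (the paper writes the constant generically as $C_p$ where you use $1/\pi^2$), the same ordering of first fixing $A$ large enough to beat $\tfrac{(n+1)^2}{2\alpha}\sigma_s\int\bu^2$ and then choosing $T$ so that $\tfrac{A\alpha^2}{2n}\sigma_s(t)<\kappa$, and the same Gr\"onwall closure exploiting the divergence of $\int^\infty\sigma_s$. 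The only cosmetic difference is that you make the polynomial decay rate $E(t)\lesssim(\alpha t+c_0)^{-\beta/\alpha}$ explicit, whereas the paper just records that the exponential factor tends to zero.
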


\begin{proof}
We multiply \eqref{linrelexp}$_1$ by $\bu$ and \eqref{linrelexp}$_2$ by $\btheta$ and  obtain
the identities
$$
\begin{aligned}
\frac{d}{dt} \int_0^1 \tfrac{1}{2} \bu^2 dx +  n \sigma_s(t) \int_0^1 \bu_x^2 dx &= \alpha \sigma_s (t) \int_0^1 \btheta_x \bu_x dx 
\\
\frac{d}{dt} \int_0^1 \tfrac{1}{2} \btheta^2 dx + \kappa \int_0^1 \btheta_x^2 dx +  \alpha \sigma_s (t) \int_0^1 \btheta^2 dx 
&= (n+1) \sigma_s (t) \int_0^1 \bu \btheta dx 
\end{aligned}
$$
Using Cauchy-Schwarz inequality, gives
\begin{equation}
\label{estim}
\begin{aligned}
\frac{d}{dt} \int_0^1 \tfrac{1}{2} \bu^2 dx +  \tfrac{n}{2} \sigma_s(t) \int_0^1 \bu_x^2 dx &\le \tfrac{\alpha^2}{2n} \sigma_s (t) \int_0^1 \btheta_x^2 dx 
\\
\frac{d}{dt} \int_0^1 \tfrac{1}{2} \btheta^2 dx + \kappa \int_0^1 \btheta_x^2 dx +  \tfrac{\alpha}{2} \sigma_s (t) \int_0^1 \btheta^2 dx 
&\le  \tfrac{(n+1)^2}{2\alpha} \sigma_s (t) \int_0^1 \bu^2 dx  \, .
\end{aligned}
\end{equation}
In view of \eqref{average}, we may use the Poincare inequality
\begin{equation}
\label{poincare}
\int_0^1 \bu^2 dx \le C_p \int_0^1 \bu_x^2 dx \, .
\end{equation}
Combining the above, we note that for $A$ a positive constant we have
$$
\begin{aligned}
\frac{d}{dt} \int_0^1 \left ( \tfrac{A}{2} \bu^2  + \tfrac{1}{2} \btheta^2 \right ) dx &+ \tfrac{A n}{2 C_p} \sigma_s(t) \int_0^1 \bu^2 dx
+  \tfrac{\alpha}{2} \sigma_s (t) \int_0^1 \btheta^2 dx + \kappa \int_0^1 \btheta_x^2 dx
\\
&\le \tfrac{A \alpha^2}{2n} \sigma_s (t) \int_0^1 \btheta_x^2 dx + \tfrac{(n+1)^2}{2\alpha} \sigma_s (t) \int_0^1 \bu^2 dx  \, .
\end{aligned}
$$
Next we select $A$ such that $\tfrac{An}{2 C_p} \ge \tfrac{(n+1)^2}{\alpha}$, and deduce that for $A$ thus selected there is a choice
$c = c(A) > 0$ such that
$$
\begin{aligned}
\frac{d}{dt} \int_0^1 \left ( \tfrac{A}{2} \bu^2  + \tfrac{1}{2} \btheta^2 \right ) dx &+ 
c \sigma_s(t) \int_0^1 \left ( \tfrac{A}{2} \bu^2  + \tfrac{1}{2} \btheta^2 \right ) dx
+ \kappa \int_0^1 \btheta_x^2 dx 
\le \tfrac{A \alpha^2}{2n} \sigma_s (t) \int_0^1 \btheta_x^2 dx  \, .
\end{aligned}
$$

Select now $T$ such that $\tfrac{A \alpha^2}{2n} \sigma_s (t) < \kappa$ for 
$t \ge T$ (recall from \eqref{ARUSS} that $\sigma_s(t)\to 0$ when $t\to 
\infty$). On the interval $t \ge T$, we have the differential
inequality
$$
\frac{d}{dt} \int_0^1 \left ( \tfrac{A}{2} \bu^2  + \tfrac{1}{2} \btheta^2 \right ) dx + 
c \sigma_s(t) \int_0^1 \left ( \tfrac{A}{2} \bu^2  + \tfrac{1}{2} \btheta^2 \right ) dx 
\le 0 \, .
$$
The latter implies \eqref{bound1} and yields, upon using Gronwall's inequality and \eqref{expuss},
$$
\int_0^1 \big ( \tfrac{A}{2} \bu^2  + \tfrac{1}{2} \btheta^2 \big )(x,t) \, dx \le  \Big ( \int_0^1 \left ( \tfrac{A}{2} \bu^2  + \tfrac{1}{2} \btheta^2 \right ) (x,T) \, dx \Big )
\exp \left \{  -c \int_T^t \sigma_s (z) dz \right \}
\to 0  \, , \quad \mbox{as $t \to \infty$}\, ,
$$
and concludes the proof
\end{proof}

The next lemma provides control of the intermediate times.

\begin{lemma} 
\label{lembound}
There exist $B > 0$ and a constant $C_B > 0$ such that 
\begin{equation}
\label{bound3}
\int_0^1 \big ( \tfrac{1}{2} \bu^2  + \tfrac{B}{2} \btheta^2 \big )(x,t) \, dx \le \left ( \int_0^1 \left ( \tfrac{1}{2} \bu^2  + \tfrac{B}{2} \btheta^2 \right ) (x,0) \, dx \right )
\exp \{ C_B t \} 
\quad \mbox{for $t \ge 0$}.
\end{equation}
\end{lemma}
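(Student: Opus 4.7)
The strategy mirrors the energy estimate of Lemma~\ref{lemdecay}, but since a bound valid on the full interval $t \ge 0$ is needed, we cannot rely on the eventual decay of $\sigma_s$ to tame the ``bad'' dissipation term coming from the first equation. Instead the plan is to exploit that $\sigma_s$ is decreasing, so $\sigma_s(t) \le \sigma_s(0) = 1/c_0$ uniformly on $[0,\infty)$, and to rebalance the weights in the combined estimate accordingly.

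I would multiply \eqref{linrelexp}$_1$ by $\bu$ and \eqref{linrelexp}$_2$ by $B\btheta$ for a constant $B>0$ to be chosen, integrate on $(0,1)$, and use the boundary conditions \eqref{linbc} together with Young's inequality on the cross terms $\alpha\sigma_s\int \bu_x\btheta_x$ and $B(n+1)\sigma_s\int \bu\btheta$, absorbing $\tfrac{n}{2}\sigma_s\int \bu_x^2$ and $\tfrac{B\alpha}{2}\sigma_s\int\btheta^2$ on the left-hand side. Dropping these nonnegative terms, the two estimates combine into
\begin{equation*}
\frac{d}{dt}\int_0^1\Big(\tfrac{1}{2}\bu^2 + \tfrac{B}{2}\btheta^2\Big)dx + \Big(B\kappa - \tfrac{\alpha^2\sigma_s(t)}{2n}\Big)\int_0^1 \btheta_x^2\,dx \le \tfrac{B(n+1)^2\sigma_s(t)}{2\alpha}\int_0^1 \bu^2\,dx.
\end{equation*}

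The key step, and the only one requiring any quantitative thought, is to pick $B$ large enough that the coefficient of $\int\btheta_x^2$ is nonnegative for every $t \ge 0$. Because $\sigma_s$ is decreasing it suffices to require $B \ge \alpha^2/(2n\kappa c_0)$; at this point the bad $\btheta_x^2$ contribution left over from the equation for $\bu$ is absorbed by the thermal diffusion coming from the equation for $\btheta$, which is precisely the place where the hypothesis $\kappa>0$ enters in an essential way. Using then $\sigma_s(t) \le 1/c_0$ on the surviving right-hand side produces a differential inequality of the form $\tfrac{d}{dt} E(t) \le C_B\, E(t)$ with
\begin{equation*}
E(t) = \int_0^1\Big(\tfrac{1}{2}\bu^2 + \tfrac{B}{2}\btheta^2\Big)dx, \qquad C_B = \tfrac{B(n+1)^2}{\alpha c_0},
\end{equation*}
and Gronwall's inequality then delivers \eqref{bound3}. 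The main obstacle, such as it is, lies in the joint calibration of $B$ with the diffusion coefficient $\kappa$; the rest is a direct adaptation of the computations already carried out in the proof of Lemma~\ref{lemdecay}.
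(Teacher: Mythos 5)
Your argument is correct and matches the paper's proof in all essentials: combine the two $L^2$ energy estimates with weights $(1,B)$, choose $B$ large enough that the thermal diffusion $B\kappa\int\btheta_x^2$ dominates the $\tfrac{\alpha^2}{2n}\sigma_s\int\btheta_x^2$ term uniformly in $t$ (using that $\sigma_s$ is decreasing), then close with Gronwall. The only cosmetic deviation is that the paper first applies the Poincar\'e inequality to $\int\bu_x^2$ before dropping it, whereas you drop it directly; this changes nothing.
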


\begin{proof}
Using again \eqref{estim} and \eqref{poincare} we have for any $B >0$
$$
\begin{aligned}
\frac{d}{dt} \int_0^1 \left ( \tfrac{1}{2} \bu^2  + \tfrac{B}{2} \btheta^2 \right ) dx &+ \tfrac{ n}{2 C_p} \sigma_s(t) \int_0^1 \bu^2 dx
+  \tfrac{ B \alpha}{2} \sigma_s (t) \int_0^1 \btheta^2 dx + B \kappa \int_0^1 \btheta_x^2 dx
\\
&\le \tfrac{ \alpha^2}{2n} \sigma_s (t) \int_0^1 \btheta_x^2 dx + B \tfrac{(n+1)^2}{2\alpha} \sigma_s (t) \int_0^1 \bu^2 dx .
\end{aligned}
$$
Select now $B$ large so that $B \kappa > \tfrac{ \alpha^2}{2n} \sigma_s (t)$ for all $t \ge 0$, and then we obtain
$$
\begin{aligned}
\frac{d}{dt} \int_0^1 \left ( \tfrac{1}{2} \bu^2  + \tfrac{B}{2} \btheta^2 \right ) dx 
\le
C_B  \int_0^1 \left ( \tfrac{1}{2} \bu^2  + \tfrac{B}{2} \btheta^2 \right ) dx , 
\end{aligned}
$$
from where \eqref{bound3} follows via Gronwall's inequality.

\end{proof}
We conclude now the analysis of stability. Suppose that for some $\delta > 0$ the initial data satisfy $\int_0^1 u_0 = 0$
and are controlled in $L^2$ by
$$
 \int_0^1 \left ( \tfrac{1}{2} \bu^2  + \tfrac{B}{2} \btheta^2 \right ) (x,0) \, dx \le \delta .
 $$
 Then \eqref{bound1}, \eqref{bound3} imply
 \begin{equation}
 \begin{aligned}
 \int_0^1 \big ( \tfrac{1}{2} \bu^2  + \tfrac{B}{2} \btheta^2 \big )(x,t) \, dx \le \delta e^{C_B t}  \quad \mbox{for $t \ge 0$} , 
 \\
 \int_0^1 \big ( \tfrac{A}{2} \bu^2  + \tfrac{1}{2} \btheta^2 \big )(x,t) \, dx \le  A \delta e^{C_B T}  \quad \mbox{for $t \ge T$} .
 \end{aligned}
 \end{equation}
 Together they imply that the state $(0,0)$ is stable in $L^2$, and in fact on account of \eqref{bound2} it is 
 asymptotically stable. We state the result in the following:
 
 \begin{proposition}
For the initial-boundary value problem \eqref{linrelexp}, \eqref{linbc} with $\kappa > 0$, 
if the initial perturbation $(\bu_0, \btheta_0)$ satisfies
$\int_0^1 \bu_0 dx = 0$ then the equilibrium $(0,0)$ is asymptotically stable in $L^2$.
 \end{proposition}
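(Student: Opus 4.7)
The plan is to assemble the asymptotic stability statement directly from the two preceding lemmas: Lemma~\ref{lemdecay} provides monotone decrease and decay to zero of a weighted $L^2$ energy once $t$ is past the threshold $T$, while Lemma~\ref{lembound} furnishes a Gronwall-type upper bound that controls the transient interval $[0,T]$. The two weighted energies $\tfrac{A}{2}\bu^2+\tfrac12\btheta^2$ and $\tfrac12\bu^2+\tfrac{B}{2}\btheta^2$ use different positive weights but each is equivalent to the standard $L^2$ norm on $(0,1)$, so bounds in one translate into bounds in the other up to a multiplicative constant.

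Before invoking the lemmas, I first verify that the hypothesis $\int_0^1 \bu_0\,dx=0$ is preserved by the dynamics, since this mean-zero property is exactly what makes the Poincar\'e inequality \eqref{poincare} applicable in the proofs of both lemmas. Integrating \eqref{linrelexp}$_1$ over $(0,1)$ and using the Neumann boundary condition \eqref{linbc} on $\bu$ yields
\[
\frac{d}{dt}\int_0^1 \bu(x,t)\,dx \;=\; \sigma_s(t)\bigl[\,n\bu_x-\alpha\btheta_x\,\bigr]_{x=0}^{x=1} \;=\; 0,
\]
so $\int_0^1 \bu(x,t)\,dx\equiv 0$ for all $t\ge 0$ whenever it vanishes initially.

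For the stability conclusion, set $E(t)=\int_0^1(\tfrac12 \bu^2+\tfrac{B}{2}\btheta^2)\,dx$ and $F(t)=\int_0^1(\tfrac{A}{2}\bu^2+\tfrac12\btheta^2)\,dx$. Given $\varepsilon>0$, assume the initial perturbation satisfies $E(0)\le\delta$. Lemma~\ref{lembound} yields $E(t)\le\delta e^{C_B T}$ on $[0,T]$; by norm equivalence this gives $F(T)\le K\delta e^{C_B T}$ for some constant $K=K(A,B)$. For $t\ge T$, Lemma~\ref{lemdecay} provides $F(t)\le F(T)$ via \eqref{bound1} together with $F(t)\to 0$ as $t\to\infty$ via \eqref{bound2}. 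Choosing $\delta$ so that $\max\{K\delta e^{C_B T},\,\delta e^{C_B T}\}\le \varepsilon$ delivers both Lyapunov stability in $L^2$ and the asymptotic decay. There is no genuine obstacle here, as all the analytical work sits inside the two lemmas; the only bookkeeping is the reconciliation of the two positive weight constants appearing in \eqref{bound1}--\eqref{bound3}, which is handled by the equivalence of weighted $L^2$ norms on a bounded interval.
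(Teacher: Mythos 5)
Your proof is correct and follows the same approach as the paper: combine Lemma~\ref{lembound} to control the transient interval $[0,T]$ with Lemma~\ref{lemdecay} for decay on $[T,\infty)$, reconciling the two weighted $L^2$ energies by norm equivalence. Your explicit verification that $\int_0^1\bu\,dx=0$ is conserved by the flow (via integrating \eqref{linrelexp}$_1$ and using the Neumann conditions) fills in a step the paper asserts without proof, and is needed to justify the Poincar\'e inequality used in both lemmas.
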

The proof of the above lemmas  and the analysis of the linearized problem with frozen coefficients indicate that the
perturbation of the base solution may drift far from the origin until eventually it is recaptured as the effect of the 
diffusion intensifies with time.  

This phenomenon of \emph{metastability} also appears in numerical experiments for the nonlinear problem; an instance of these experiments appears in Figure \ref{mtstb}.  We solve numerically
 the rescaled system \eqref{rescaled} for $\kappa=0.5,\ \alpha=0.5, \ n=0.05$. 
 The initial conditions are perturbations of the uniform shearing solutions and they are : $v(x,0)=x$ and $\theta(x,0)=1 + \delta(x)$, where $\delta (x)$ is a small Gaussian perturbation centered at the middle of the interval.  The numerical method is based on adaptivity in space and time. An adaptive finite element method is used for the spatial discretisation while a Runge-Kutta method with variable time step is used for the discretisation in time. Further details concerning the numerical scheme and the adaptivity criteria can be found in \cite{BKT}.   In Figure \ref{mtstb} the evolution in time (in natural logarithmic scale) 
 is presented for the original variables $v$ and $\theta$ in \eqref{resctran}; $\theta$  is sketched in  natural logarithmic scale.  
 We notice that,  after a transient period of localization, thermal diffusion dominates the process, inhomogeneities of the state variables dissipate, 
 and eventually the solution approaches a uniform shearing solution \eqref{uss}. 
 \begin{figure}
\centering
\vspace{-0.1cm}
\includegraphics[width=8cm, height=6cm]{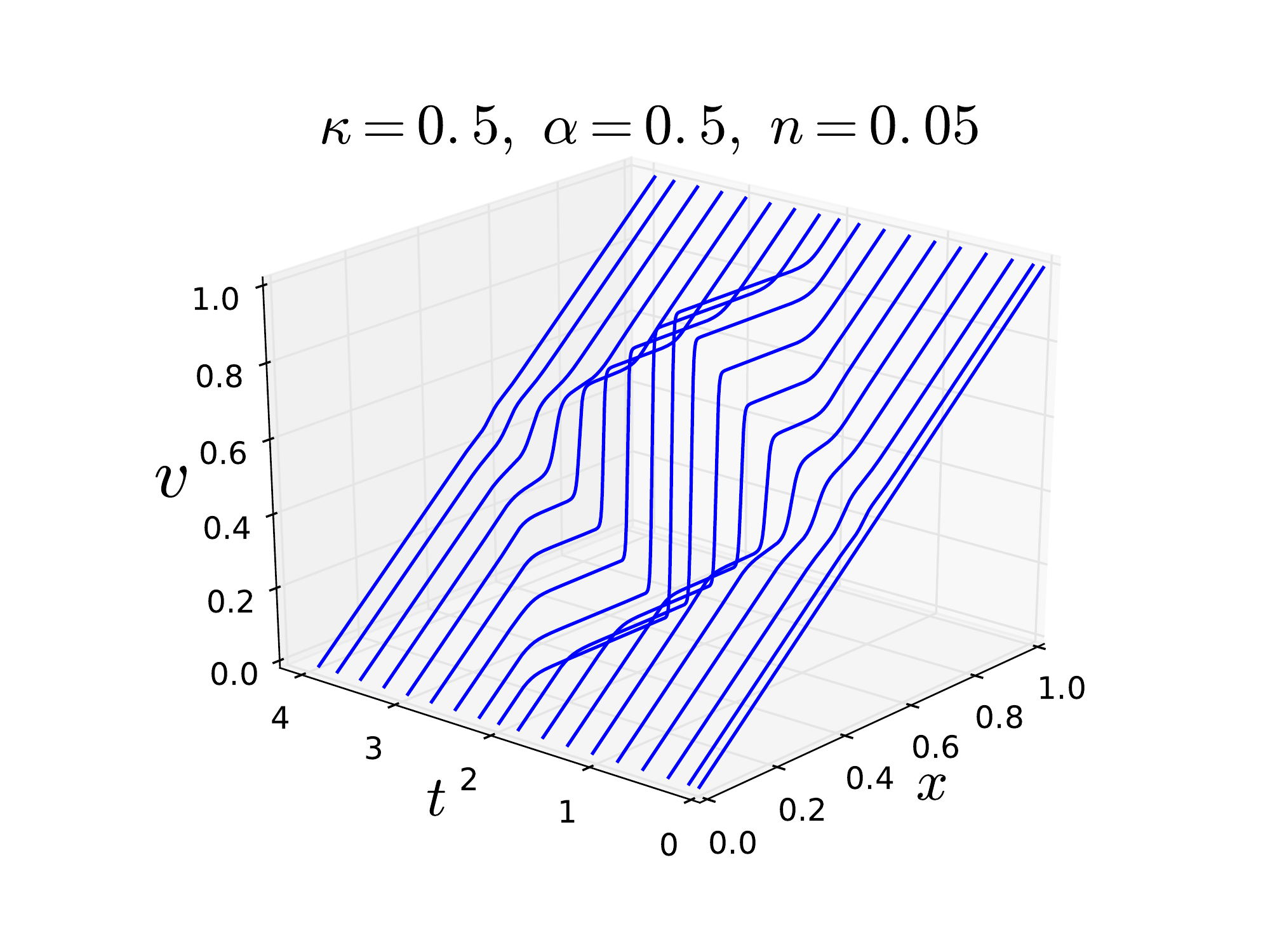}
\includegraphics[width=8cm, height=6cm]{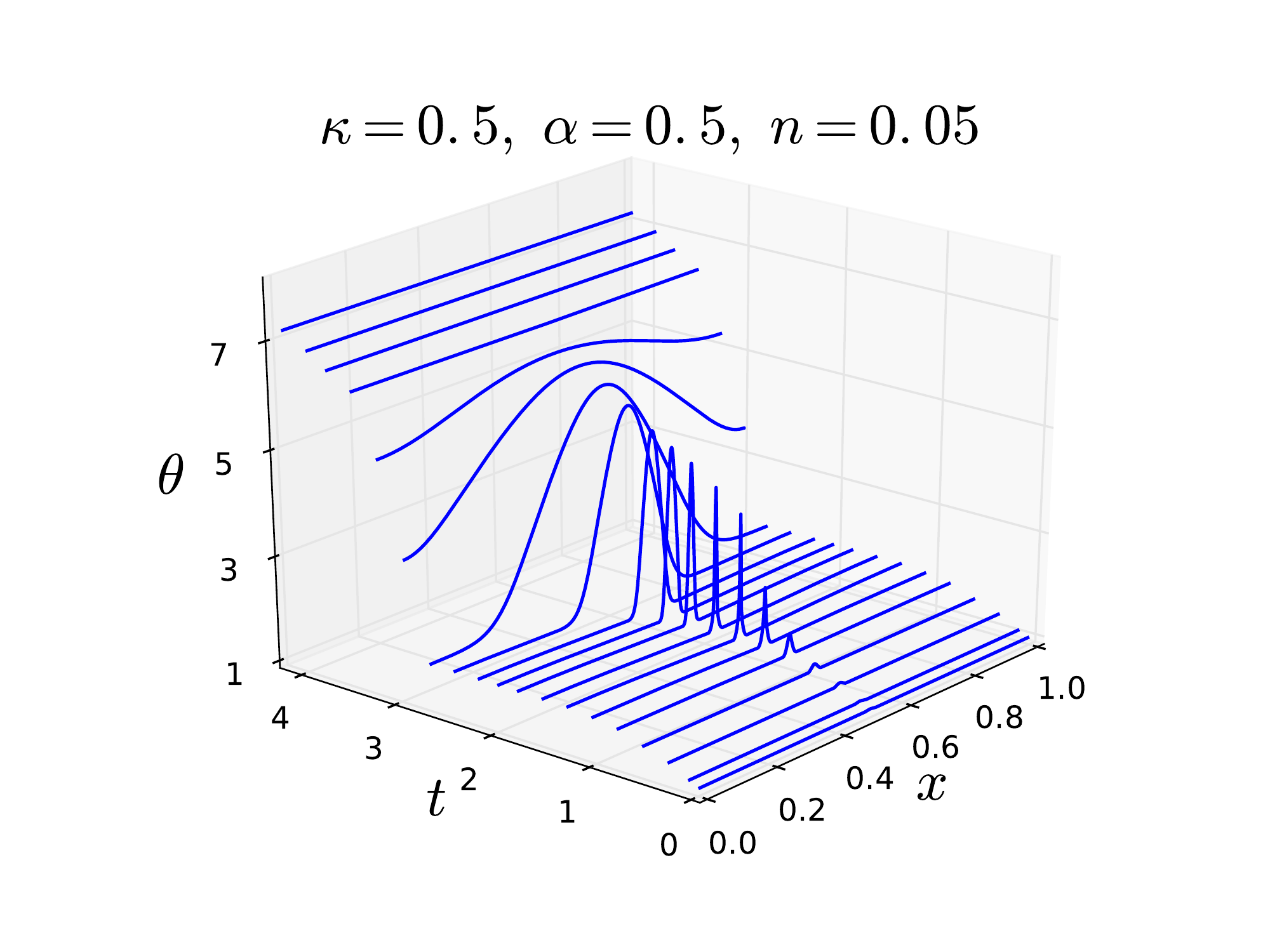}
\vspace{-0.5cm}
\caption{Metastability for the nonlinear problem  : $v(x,t)$ (left), $\theta (x,t)$ (right).}
\label{mtstb}
\end{figure}


\section{Localization - Part I}
\label{seclocal}

We consider next the problem \eqref{shear} on the whole real line and put aside the boundary conditions.
The objective is to construct special solutions that capture a process of localization. This is done over three sections.
The present one is preliminary in nature and discusses various properties that make
the construction feasible and the setting of the problem. The main step is performed in section \ref{secss} where a special class
of self-similar solutions are constructed. The localized solutions are 
then presented in section  \ref{seclocalII}.

\subsection{Relative perturbations and a change of time scale}
We consider \eqref{shear} on $\R \times (0,\infty)$ and recall the notation $u = v_x$. The uniform shearing solutions \eqref{uss}
are defined by solving \eqref{usseq}. Introduce  the transformation
\begin{equation}
\label{resctran}
\begin{aligned}
u(x,t) &= \hU (x, \tau(t) ),
\\
\theta (x,t) &= \theta_s (t)  + \hTheta (x, \tau (t) ),
\\
\sigma (x,t) &= \sigma_s (t) \hS (x,\tau (t) ),
\end{aligned}
\end{equation}
which involves a passage to relative perturbations (as in section \ref{secrelpert}) and a change of time scale from the original to
a rescaled time $\tau(t)$. The scaling $\tau(t)$ is  precisely the one used in the linearized analysis of sections \ref{secchatime} and \ref{seclinfroz};
it is defined by
\begin{equation*}
\tau(t)=\frac{1}{\alpha}\log{ \left( \frac{c_0 + \alpha  t }{c_0} \right ) } = \theta_s (t) - \theta_s (0) \, ,
\end{equation*}
it satisfies
\begin{equation*}
\left \{
\begin{aligned}
\frac{d\tau}{dt} &=  \frac{d\theta_s }{dt} = \sigma_s(t)= e^{-\alpha \theta_s}
\\
\tau(0) &= 0
\end{aligned}
\right .
\end{equation*}
where $\theta_s (t)$ is as in \eqref{expuss}, $c_0 = e^{\alpha \theta_0}$. The scaling $\tau (t)$ satisfies
$\tau (t) \to \infty$ as $t \to \infty$,  and
$\tau(t)$ is invertible with inverse map given by
$$
t = \hat t (\tau) = \frac{c_0}{\alpha} \left ( e^{\alpha \tau} -1 \right ) \, .
$$
The functions $(\hU (x,\tau), \hTheta (x, \tau) , \hS (x, \tau) )$ satisfy the system of partial differential equations
\begin{equation}
\label{rescaled}
\begin{aligned}
\hU_\tau &=  \hS_{xx},
\\[2pt]
\hTheta_\tau - \kappa c_0 e^{\alpha \tau}  \hTheta_{xx} &=  \big ( \hS \, \hU -1  \big ),
\\[2pt]
\hS &= e^{-\alpha \hTheta} \hU^n .
\end{aligned}
\end{equation}
We emphasize that
\begin{itemize}
\item For $\kappa > 0$ the system is non-autonomous, which is natural in 
view of the nature of the transformation \eqref{resctran}.
\item Notably, for $\kappa = 0$ the system \eqref{rescaled} becomes autonomous.
\end{itemize}

\subsection{Effective equation for the adiabatic system}

Next, we consider the adiabatic case $\kappa = 0$ where \eqref{rescaled} takes the form
\begin{equation}
\label{adia}
\begin{aligned}
\hU_\tau &=  \hS_{xx},
\\[2pt]
\hTheta_\tau  &=  \big ( \hS \, \hU -1  \big ),
\\[2pt]
\hS &= e^{-\alpha \hTheta} \hU^n \, .
\end{aligned}
\end{equation}

To understand the long time response of \eqref{adia} we perform a formal asymptotic analysis following ideas from \cite{KT}.
Fix an observational time scale $T$ and introduce the change of variables
$$
\begin{aligned}
\hU(x,\tau) &= \tU_T \left ( \frac{x}{\sqrt{T}} , \frac{\tau}{T}  \right ),
\\
\hS (x,\tau) &= \tS_T \left ( \frac{x}{\sqrt{T}} , \frac{\tau}{T}  \right ),
\\
\hTheta (x,\tau) &= \tTheta_T \left ( \frac{x}{\sqrt{T}} , \frac{\tau}{T}  \right ).
\end{aligned}
$$
One easily checks that $(\tU_T (y,s), \tS_T (y,s) , \tTheta_T (y,s) )$ satisfies the system
\begin{equation}
\label{adiaT}
\begin{aligned}
\tU_s &=  \tS_{y y}\,,
\\[2pt]
\tfrac{1}{T}  \, \tTheta_s  &=  \big ( \tS \, \tU -1  \big ),
\\[2pt]
\tS &= e^{-\alpha \tTheta} \tU^n .
\end{aligned}
\end{equation}
We are interested in calculating the equation describing the effective response of \eqref{adiaT} for $T$ sufficiently large.
This asymptotic analysis involves a parabolic equation \eqref{adiaT}$_1$,  which can be thought as a moment equation,
and equations \eqref{adiaT}$_2$ and \eqref{adiaT}$_3$, which may be thought as describing a relaxation process towards the equilibrium curve
described by 
$$
\begin{aligned}
\tS \tU - 1 &= 0
\\
\tS &= e^{-\alpha \tTheta} \tU^n
\end{aligned}
\quad \Longleftrightarrow  \quad
\tS = \frac{1}{\tU} \, , \quad \tTheta = \frac{n+1}{\alpha} \log \tU \, .
$$
Note that the equilibria are a one-dimensional curve parametrized by $\tU$.

To calculate the effective equation we perform a procedure analogous to the Chapman-Enskog expansion of kinetic theory. 
We refer to \cite[Sec 5]{KT2} for the details of the asymptotic analysis. The procedure produces
the following effective equation
\begin{equation}
\label{effec}
u_s = \del_{yy} \left ( \frac{1}{u} \right ) + \tfrac{1}{T} \tfrac{n+1}{\alpha} \del_{yy} \left ( \frac{1}{u^2} \del_{yy} \frac{1}{u} \right ) \, ,
\end{equation}
which captures the effective response up to order $O (\tfrac{1}{T^2} )$. The leading term is backward parabolic and the first
order correction is fourth order. One checks that the effect of the fourth order term is to stabilize the equation, in the sense that the linearized
equation around the equilibrium $u_0 =1$  is stable (see \cite{KT2} for the details). 

It was further shown in \cite{KT2} that \eqref{effec} admits special 
solutions that exhibit localization, which is not surprising 
as the leading response in \eqref{effec} is backward parabolic. Localized solutions of a similar nature will be  constructed here directly for
the system \eqref{adia}. This is a more ``unexpected" behavior, because  
\eqref{adia} is at least {\it pro-forma} a ``hyperbolic-parabolic" system, 
and the explanation for this behavior lies precisely in the asymptotic 
relation between  \eqref{adia} and
\eqref{effec}.

\subsection{Invariance properties and a related ansatz}
\label{secinvariance}

It is easy to check that  for $\kappa = 0$ the system \eqref{rescaled}  is invariant under the one-parameter scaling transformation
\begin{equation}
\label{scaleinv}
\begin{aligned}
\hU_a  (x, \tau) &= a \hU (a x, \tau) ,
\\
\hS_a (x, \tau)  &= \frac{1}{a} \hS (a x, \tau),
\\
\hTheta_a (x, \tau)  &= \frac{n+1}{\alpha} \log a + \hTheta (a x, \tau) ,
\end{aligned}
\end{equation}
for any $a > 0$. It is also easy to check that this is the only scaling transformation under which the problem is invariant.
This transformation has the distinctive feature that the scaling of space is independent of the scaling of time.
Finally, the system \eqref{rescaled} is not scaling invariant when the
thermal diffusion $\kappa$ is positive.

The scaling invariance \eqref{scaleinv} motivates a change of variables 
and a related {\it ansatz} of solutions. Introduce a variable in
time scaling $r (\tau)$, the variable $\xi = \frac{x}{ r(\tau)}$, and the change of variables
\begin{equation}
\label{adavar}
\begin{aligned}
\hU (x, \tau) &= \frac{1}{r(\tau)} \bU \Big ( \frac{x}{r(\tau) } , \tau \Big ),
\\
\hS (x, \tau)  &= r(\tau) \, \bS \Big ( \frac{x}{r(\tau) } , \tau \Big ),
\\
\hTheta (x, \tau)  &= - \frac{n+1}{\alpha} \log r(\tau)  + \bTheta \Big ( \frac{x}{r(\tau) } , \tau \Big ) .
\end{aligned}
\end{equation}
We denote by $\dot r = \frac{d}{d\tau} r(\tau)$ and by $'=\frac{d}{d\xi}$. Introducing \eqref{adavar} to \eqref{rescaled}, we obtain
that the function $( \bU (\xi, \tau), \bS (\xi, \tau), \bTheta (\xi, \tau) )$ satisfies the system of partial differential equations
\begin{equation}
\label{sysxitau}
\begin{aligned}
\bU_\tau - \frac{\dot r}{r} \big ( \xi \bU \big )_{\xi} &=  \bS_{\xi \xi} ,
\\[2pt]
\bTheta_\tau - \frac{\dot r}{r} \left ( \tfrac{n+1}{\alpha} + \xi \bTheta_{\xi} \right ) ,
-  \kappa c_0 \frac{e^{\alpha \tau}}{r^2 (\tau)}   \bTheta_{\xi \xi} &=  \big ( \bS \, \bU -1  \big ) ,
\\[2pt]
\bS &= e^{-\alpha \bTheta} \bU^n .
\end{aligned}
\end{equation}

The resulting system \eqref{sysxitau} depends on the choice of the scaling function $r(\tau)$. We will be interested in solutions
that are steady states in the new variables. Of course such solutions are dynamically evolving in
the original variables. There are two cases that produce simplified results of interest:
\begin{enumerate}
\renewcommand{\theenumi}{\alph{enumi}}
\item Consider the case $\kappa > 0$, and select $r(\tau) = e^{ 
\tfrac{1}{2} \alpha \tau} $. Then the system \eqref{sysxitau}
is consistent with the ansatz of steady state solutions $(\bU(\xi), \bS(\xi), \bTheta (\xi))$ and the latter are constructed by solving the
system of ordinary differential equations
\begin{equation}
\label{sysxikappa}
\begin{aligned}
 - \frac{\alpha}{2} \big ( \xi \bU \big )' &=  \bS'' ,
\\[2pt]
-\frac{\alpha}{2} \left ( \tfrac{n+1}{\alpha} + \xi \bTheta' \right ) 
-  \kappa c_0   \bTheta''  &=  \bS \, \bU -1 ,
\\[2pt]
\bS &= e^{-\alpha \bTheta} \bU^n .
\end{aligned}
\end{equation}

Since  $r(\tau)  \to \infty$ as $\tau$ increases,  solutions of \eqref{sysxikappa} will produce 
defocusing solutions for \eqref{sysxitau}. A study of \eqref{sysxikappa} is expected to
give information about the convergence to the uniform  shearing state in the non adiabatic setting.
This is not the focus of the present work and we will not deal further with the system \eqref{sysxikappa}.

\item
Instead we consider the case $\kappa = 0$. We may now select any exponential function for $r(\tau)$ but are most interested in
$r(\tau ) = e^{- \lambda \tau}$ with $\lambda > 0$ as this produces focusing solutions. With this selection, 
$(\bU(\xi), \bS(\xi), \bTheta (\xi))$ is a steady solution for \eqref{sysxitau} with $\kappa = 0$ provided it satisfies
\begin{equation}
\label{sysxi}
\begin{aligned}
\bS'' &=  \lambda  \big ( \xi \bU \big )' ,
\\[2pt]
\lambda \left ( \tfrac{n+1}{\alpha} + \xi \bTheta' \right ) 
  &=  \bS \, \bU -1 ,
\\[2pt]
\bS &= e^{-\alpha \bTheta} \bU^n ,
\end{aligned}
\end{equation}
where $\lambda > 0$ is a parameter.
\end{enumerate}

\begin{remark} \rm Note that:
\begin{itemize}
\item
The form of the transformation \eqref{adavar} is motivated by the scaling invariance \eqref{scaleinv}.
\item
The scaling connects the two variables
and produces interesting systems even in the case  $\kappa$ positive when 
the system \eqref{rescaled} is not scale
invariant.
\end{itemize}
\end{remark}

\section{The auxiliary problem}
\label{secss}

In this section we consider the system of ordinary differential equations
\begin{equation}
\label{msys}
\begin{aligned}
\Sigma' &=  \xi U ,
\\[2pt]
\nu \left ( \tfrac{n+1}{\alpha} + \xi \Theta' \right ) 
  &=  \Sigma \, U -1 ,
\\[2pt]
\Sigma &= e^{-\alpha \Theta} U^n \, ,
\end{aligned}
\end{equation}
on the domain $0< \xi < \infty$ with $\nu > 0$,  subject to the initial condition
\begin{equation}
\label{mic}
\Sigma (0) = \Sigma_0>0.
\end{equation}
This is an auxiliary problem that will be studied in 
detail in this section.
We emphasize that it does not coincide with the integrated form of \eqref{sysxi}, however it will be used
together with additional properties  in order to construct solutions of  \eqref{sysxi} 
and localizing solutions for \eqref{adia} in the following section.

In this section, we are interested in constructing solutions $(U (\xi), \Sigma (\xi) , \Theta (\xi))$ for 
the singular initial value problem \eqref{msys}, \eqref{mic} for any $\nu > 0$.  The following elementary remarks are in order:
\begin{enumerate}
\renewcommand{\theenumi}{\alph{enumi}}
\item
When $\nu = 0$,  \eqref{msys} simplifies to 
\begin{equation}
\label{msys0}
\left \{ 
\begin{aligned}
\Sigma' &=  \xi U 
\\[2pt]
\Sigma (0) &= \Sigma_0 
\end{aligned}
 \right .
 \qquad
 \left \{
 \begin{aligned} 
 \Sigma \, U  &= 1 
\\[2pt]
\Theta  &= \frac{n+1}{\alpha} \log U 
\end{aligned}
\right . .
\end{equation}
The solution of \eqref{msys0} is computed explicitly by
\begin{equation}
\label{inn}
\begin{aligned}
\Sigma (\xi) &= \sqrt{  \xi^2 + \Sigma_0^2} ,
\\
U(\xi) &= \frac{1}{ \Sigma (\xi)} = \frac{1}{  \sqrt{\xi^2 + \Sigma_0^2} },
\\
\Theta (\xi) &=  - \frac{n+1}{\alpha} \log \sqrt{  \xi^2 + \Sigma_0^2}  \, .
\end{aligned}
\end{equation}

\item
When $\nu>0$, the system  \eqref{msys} is invariant under the family of 
scaling transformations
\begin{equation}
\label{scaleinv2}
\begin{aligned}
U_b (\xi) &= b U (b \xi) ,
\\
\Sigma_b (\xi)  &= \frac{1}{b} \Sigma (b \xi),
\\
\Theta_b (\xi)  &= \frac{n+1}{\alpha} \log b + \Theta (b \xi),
\end{aligned}
\end{equation}
for  $b > 0$.  There is a  special solution of \eqref{msys} that is self-similar with respect
to the scaling invariance \eqref{scaleinv2} and reads
\begin{equation}
\label{out}
\hS (\xi) = \xi \, , \quad \hU (\xi) = \frac{1}{\xi} \, , \quad \hTheta (\xi) = - \tfrac{n+1}{\alpha} \log \xi  \, .
\end{equation}
This solution does not however satisfy the initial condition \eqref{mic}.
\end{enumerate}

We next construct a function $(U (\xi), \Sigma (\xi) , \Theta (\xi))$ which satisfies  \eqref{msys}, \eqref{mic}
for all values of $\nu > 0$ and $\Sigma_0 > 0$. We will show that the 
constructed solution will have the inner behavior of \eqref{inn} and the 
outer 
behavior of \eqref{out}.

\begin{theorem}
\label{connection}
Let $\Sigma_0 > 0$ be fixed. Let us note $c_\nu=1+\nu(n+1)/\alpha$.
Given $\nu > 0$ there exists a solution $(\Sigma_\nu , U_\nu , \Theta_\nu )$ of \eqref{msys}, \eqref{mic} 
defined for $\xi \in (0, \infty)$ and satisfying the  properties:

\begin{enumerate}
\renewcommand{\theenumi}{\roman{enumi}}

\item It achieves the initial data
\begin{equation}
\label{behavior0}
\lim_{\xi \to 0}
\begin{pmatrix} 
\Sigma (\xi) \\ 
U(\xi)  \\
\Theta(\xi)
\end{pmatrix}
=
\begin{pmatrix}
\Sigma_0 \\
U_0  \\
\Theta_0
\end{pmatrix}
\, , 
\end{equation}
where  $\Sigma_0$ as in \eqref{mic} and $U_0$, $\Theta_0$ are defined via
\begin{equation}
\label{defutheta}
\begin{aligned}
U_0 \Sigma_0 &= c_\nu .
\\
\Theta_0 &= \tfrac{n+1}{\alpha} \log U_0  - \tfrac{1}{\alpha} \log c_\nu \, ,
\end{aligned}
\end{equation}

\item  As $\xi \to \infty$ it has the limiting behavior 
\begin{equation}
\label{behavinfi}
\begin{aligned}
U(\xi)  &= \frac{1}{\xi} + o \left ( \frac{1}{\xi} \right ) ,
\\
\Sigma (\xi) &= \xi + o (\xi) ,
\\[2pt]
\Theta (\xi) &=   - \tfrac{n+1}{\alpha} \log \xi + o(1) .
\end{aligned}
\end{equation}

\item   The solution has the regularity $U$, $\Theta \in C^1 [0,\infty)$, 
$\Sigma \in C^2 [0,\infty)$. It satisfies
\begin{equation}
\label{limitderiv}
\frac{d \Sigma }{d\xi} (0)  =   \frac{dU}{d\xi} (0)  = \frac{d \Theta }{d\xi} (0) = 0 \, , 
\end{equation}
and has the Taylor expansion as $\xi \to 0$, 
\begin{equation}
\label{taylor}
\begin{aligned}
U (\xi) &= U_0  + o (\xi),
\\
\Sigma (\xi) &= \Sigma_0 + \tfrac{1}{2} U_0 \xi^2 +  o (\xi^2),
\\
\Theta (\xi) &= \Theta_0 + o (\xi) \, ,
\end{aligned}
\end{equation}
where as usual $\lim_{\xi \to 0} \tfrac{1}{\xi} o (\xi) = 0$ (but the rate 
may degenerate as $\nu$ tends to $0$).
\end{enumerate}

\end{theorem}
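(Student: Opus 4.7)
The strategy is to recast the singular system \eqref{msys} as a planar autonomous dynamical system, identify two equilibria corresponding to the desired behaviors at $\xi \to 0^+$ and $\xi \to \infty$, and produce a heteroclinic orbit between them via a Poincar\'{e}-Bendixson argument. First, I would use the constitutive equation $\Sigma = e^{-\alpha\Theta} U^n$ to eliminate $\Theta$. Differentiating gives $\alpha \xi \Theta' = n\xi U'/U - \xi^2 U/\Sigma$, so \eqref{msys}$_2$ becomes
\begin{equation*}
\frac{\nu n}{\alpha}\xi \frac{U'}{U} = \Sigma U - c_\nu + \frac{\nu}{\alpha}\frac{\xi^2 U}{\Sigma}.
\end{equation*}
Desingularize by setting $s = \log \xi$ and changing dependent variables to
\begin{equation*}
W = \Sigma U, \qquad Y = \frac{\xi^2 U}{\Sigma},
\end{equation*}
which are the two natural scaling invariants of \eqref{scaleinv2} that remain bounded near both $\xi = 0$ and $\xi = \infty$. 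A direct computation (using $\Sigma' = \xi U$) then shows that $(W(s), Y(s))$ satisfies the planar autonomous system
\begin{equation*}
\dot W = \tfrac{n+1}{n}\, Y W + \tfrac{\alpha W}{\nu n}(W - c_\nu), \qquad \dot Y = 2 Y + \tfrac{\alpha Y}{\nu n}(W - c_\nu) + \tfrac{1-n}{n}\, Y^2,
\end{equation*}
where $\dot{} = d/ds$, completely decoupled from $\xi$ and $\Theta$.

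Next, I would locate and classify the equilibria in the open quadrant $W, Y > 0$. Two of them matter: $E_0 = (c_\nu, 0)$, whose Jacobian has positive eigenvalues $\alpha c_\nu/(\nu n)$ and $2$ (an unstable node) and which corresponds under the change of variable to $s \to -\infty$, i.e.\ $\xi \to 0^+$; and $E_\infty = (1,1)$, whose Jacobian has negative determinant $-2\alpha/(\nu n)$ (a saddle) and which corresponds to $s \to +\infty$. The goal is a heteroclinic orbit $E_0 \to E_\infty$. The stable manifold of $E_\infty$ is one-dimensional; I would trace it backward in $s$ and confine it to a compact, positively invariant region $R$ in the quadrant bounded by arcs of the $\dot W = 0$ and $\dot Y = 0$ nullclines (supplemented if needed by a cut-off $\{Y \le M\}$), arranged so that the vector field points strictly inward on $\partial R$. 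Periodic orbits in $R$ would be ruled out either by a Bendixson-Dulac test (a natural Dulac factor to try is $1/(WY)$) or by observing that any closed orbit must enclose an equilibrium in the interior, while $E_\infty$ is a saddle and the other equilibria on $\partial R$ are excluded by construction. Poincar\'{e}-Bendixson then forces the $\alpha$-limit set of the backward trajectory to be a single equilibrium, and the only candidate in $R$ is $E_0$.

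Finally, I would unwind the transformation. The scaling invariance \eqref{scaleinv2} lets me rescale the constructed orbit so that $\Sigma(0) = \Sigma_0$ for any prescribed $\Sigma_0 > 0$. At the saddle $E_\infty$, $\Sigma U \to 1$ and $\xi^2 U/\Sigma \to 1$ immediately give $U \sim 1/\xi$ and $\Sigma \sim \xi$; the constitutive relation then yields $\Theta \sim -\tfrac{n+1}{\alpha}\log \xi$, establishing \eqref{behavinfi}. Near $E_0$, the eigenvector for the eigenvalue $\alpha c_\nu/(\nu n)$ is purely along the $W$-axis, so our orbit (which satisfies $Y > 0$) must leave tangent to the other eigenvector of eigenvalue $2$, giving $Y \sim c_1 e^{2s} = c_1 \xi^2$ and $W - c_\nu \sim c_2 e^{2s} = c_2 \xi^2$ as $\xi \to 0$. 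Translating back to $(\Sigma, U, \Theta)$ and using $\Sigma' = \xi U$ then yields \eqref{behavior0}, \eqref{defutheta}, \eqref{limitderiv}, and the Taylor expansion \eqref{taylor}; the $C^1$ regularity of $U, \Theta$ and the $C^2$ regularity of $\Sigma$ at the origin follow from the $O(\xi^2)$ corrections.

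The main obstacle is Step 3: constructing $R$ and verifying the Poincar\'{e}-Bendixson hypotheses uniformly in $n > 0$ and $\nu > 0$. The sign of the coefficient $(1-n)/n$ of $Y^2$ flips across $n = 1$ and the slow/fast ordering of the two positive eigenvalues at $E_0$ flips across $\alpha c_\nu = 2\nu n$; both change the geometry of the nullclines and of the leaving directions at $E_0$. Handling these regimes with a single choice of $R$, and finding a Dulac factor that works for all $n$ and $\nu$, are the delicate parts of the argument.
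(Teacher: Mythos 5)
Your broad strategy is the same as the paper's: rescale out $\xi$ via $s=\log\xi$, reduce to a planar autonomous system in scale-invariant coordinates, recognize the origin and infinity as a node and a saddle, and close the argument with Poincar\'e--Bendixson. The difference is the choice of coordinates for the planar system. You take $(W,Y)=(\Sigma U,\,\xi^2 U/\Sigma)$; the paper works with $(a,b)=(1/\sigma,1/(\sigma u))$ where $\sigma(\log\xi)=\Sigma(\xi)/\xi$, $u(\log\xi)=\xi U(\xi)$. These are related by the diffeomorphism $W=1/b$, $Y=a^2/b$, so your equilibria $E_0=(c_\nu,0)$, $E_\infty=(1,1)$ are the images of the paper's $P=(0,1/c_\nu)$, $Q=(1,1)$, and your computed ODEs, eigenvalues at $E_0$, and the determinant $-2\alpha/(\nu n)$ at $E_\infty$ are all correct.

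The genuine gap is the step you yourself flag: you never actually produce a negatively invariant region or rule out closed orbits, and in your coordinates this is not straightforward to do uniformly in $n,\nu$. Two concrete problems. First, the invariance direction is stated the wrong way around: to apply Poincar\'e--Bendixson to the \emph{backward} orbit emanating from the saddle you need a region that is \emph{negatively} invariant for the forward flow (the vector field points \emph{outward} on the boundary), not a positively invariant region with the field pointing inward as you write. Second, your candidate Dulac factor $1/(WY)$ fails: with $g=1/(WY)$,
\begin{equation*}
\partial_W\!\left(g\,\dot W\right)+\partial_Y\!\left(g\,\dot Y\right)
= \frac{\alpha}{\nu n\,Y}+\frac{1-n}{n\,W},
\end{equation*}
which is sign-definite only when $n\le 1$. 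This is precisely where the paper's choice of coordinates pays off. In $(a,b)$ the region $\mathcal R=\{\,0\le a\le 1,\ a^2\le b\le 1\,\}$ is bounded by the $b$-axis (itself an orbit), the line $b=1$, and the parabola $b=a^2$; the field exits through both curved pieces for every $n>0$ and $\nu>0$, and $da/d\eta = a(1-a^2/b)>0$ strictly in the interior, so closed orbits are impossible by a one-line argument. Translated to your variables, that interior monotonicity is $\tfrac{d}{ds}(Y/W)=-2(Y/W)(Y-1)>0$ whenever $Y<1$, and the paper's $\mathcal R$ maps to $\{W\ge 1,\ Y\le 1,\ YW\ge 1\}$ (with corner $E_\infty$ at $W=Y=1$ and the node at $(c_\nu,0)$ on the boundary $Y=0$): you could in principle use this region, but it is much less visible in the $(W,Y)$ chart than in $(a,b)$.

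One smaller imprecision: the phrase ``must leave tangent to the other eigenvector of eigenvalue $2$'' is not the right justification for $Y\sim c_1e^{2s}$ as $s\to-\infty$, since when $\alpha c_\nu/(\nu n)<2$ the generic leaving direction is the $W$-axis. The correct reason is that the linearization at $E_0$ is upper triangular, so the $Y$-equation decouples to $\dot Y\approx 2Y$; together with the fact that the $W$-axis is an invariant trajectory (forcing the analogue of the paper's $\kappa_1\ne 0$), this gives $Y\sim c_1e^{2s}$ with $c_1>0$ for any orbit off the axis. Your conclusion is right but the stated reasoning is not. The remaining items --- recovery of $\Sigma_0$ from $\Sigma=\xi\sqrt{W/Y}$, the behavior at $E_\infty$ giving \eqref{behavinfi}, and the Taylor expansion \eqref{taylor} via $\Sigma'=\xi U$ --- are sound once the heteroclinic exists, and match what the paper does after its Step 3.
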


\begin{proof}
{\it Step 1.} Motivated by the special solution \eqref{out} we introduce a change of variables in two steps:
\begin{alignat}{2}
U (\xi) &= \frac{1}{\xi} {\bar u} (\xi) &&=  \frac{1}{\xi} u ( \log \xi) ,
\nonumber
\\
\label{indep}
\Sigma (\xi) &= \xi {\bar \sigma} (\xi)  &&=   \xi \sigma (\log \xi)  ,
\\[3pt]
\Theta (\xi) &= - \tfrac{n+1}{\alpha} \log \xi  +{\bar \theta} (\xi) &&=  - \tfrac{n+1}{\alpha} \log \xi  + \theta (\log \xi),
\nonumber
\end{alignat}
Indeed, we easily  check that $( {\bar u} , {\bar \sigma} , {\bar \theta} )(\xi)$ satisfies the system of equations
\begin{equation}
\label{modmsys}
\begin{aligned}
\xi \, {\bar \sigma}' &=  {\bar u} - {\bar \sigma} ,
\\[2pt]
\nu  \xi \, {\bar \theta}'  &=  {\bar \sigma} \, {\bar u} -1 ,
\\[2pt]
{\bar \sigma} &= e^{-\alpha {\bar \theta}} {\bar u}^n \, .
\end{aligned}
\end{equation}
In the second step, we introduce in \eqref{modmsys} the change of independent variable $\eta = \log \xi$
and derive that $(u, \sigma , \theta)(\eta)$ satisfy 
\begin{equation}
\label{finmsys}
\begin{aligned}
\frac{d\sigma}{d\eta}  &=  u - \sigma,
\\[2pt]
\nu  \frac{d\theta}{d\eta}   &=  \sigma \, u  -1 ,
\\[2pt]
\sigma  &= e^{-\alpha \theta} u^n \, .
\end{aligned}
\end{equation}
The system \eqref{finmsys} is far simpler than the original \eqref{msys} in 
that it is autonomous and non singular.

\medskip
{\it Step 2.} We next consider \eqref{finmsys} and perform a further 
simplification, via the change of dependent variables
\begin{equation}
\label{changed}
\begin{aligned}
a &:= \frac{1}{\sigma},
\\
b &:= \frac{1}{\sigma^{1+\frac{1}{n} } \,  e^{\frac{\alpha}{n} \theta} 
}  = \frac{1}{\sigma u} \, .
\end{aligned}
\end{equation}
Note that $u = \frac{a}{b}$. We now compute
$$
\frac{da}{d\eta} = - \frac{1}{\sigma^2} \frac{d\sigma}{d\eta} 
= - \frac{1}{\sigma^2} \Big ( \sigma^{\frac{1}{n} }  e^{\frac{\alpha}{n} \theta} - \sigma \Big )
= a - \frac{a^3}{b}
$$
and
$$
\begin{aligned}
\frac{db}{d\eta} &= - \big ( 1+\tfrac{1}{n} \big ) b \frac{1}{\sigma} 
\frac{d\sigma}{d\eta} - \tfrac{\alpha}{n} b \frac{d\theta}{d\eta}
\\
&= - \tfrac{n+1}{n} b \frac{1}{\sigma} \left ( \frac{1}{\sigma 
b} - \sigma \right ) - \tfrac{\alpha}{\nu n} b \left ( 
\frac{1}{b} -1 \right )
\\
&= \tfrac{\alpha}{\nu n} \Big  ( \big (1 + \nu \tfrac{n+1}{\alpha} \big )
b -1 - \tfrac{n+1}{\alpha} \nu a^2  \Big ) .
\end{aligned}
$$

In summary, we conclude that $(a , b)(\eta)$ satisfies the autonomous 
system of ordinary differential equations
\begin{equation}
\label{final}
\frac{d}{d\eta}
\begin{pmatrix}
a
\\
b
\end{pmatrix}
=
\begin{pmatrix}
a \big ( 1 - \frac{a^2}{b} \big )
\\
\tfrac{\alpha}{\nu n} \Big  ( c_\nu  b -1 - \tfrac{n+1}{\alpha} \nu 
a^2  \Big ) 
\end{pmatrix}
\;
= :  \; F(a, b) ,
\end{equation}
where 
\begin{equation}
\label{defce}
c_\nu : =  \big (1 + \nu \tfrac{n+1}{\alpha} \big )   \; >  \; 1 \, .
\end{equation}

There are two advantages in \eqref{final} relative to \eqref{finmsys}:  
\begin{enumerate}
\renewcommand{\theenumi}{\roman{enumi}}
\item 
\eqref{final} is expressed via only two independent variables $(a, b)$.
\item The equilibrium at infinity in \eqref{finmsys} is pulled at the axis 
for \eqref{final} via the transformation \eqref{changed}.
\end{enumerate}

\medskip
{\it Step 3.} We perform a phase space analysis of \eqref{final} and 
establish the existence of a heteroclinic connection. We show:

\begin{proposition}
\label{thode}
Consider the  autonomous planar system of differential equations \eqref{final} with $c_\nu$ defined by \eqref{defce}.
For any $\nu > 0$ the system has the following properties:
\begin{itemize}
\item $F$ has  two equilibria $P=(0,1/ c_\nu )$, $Q=(1,1)$ in the first quadrant. Both $P$ and $Q$ are hyperbolic points, $P$ is a repelling node 
while $Q$ is a saddle point.
\item There exists a heteroclinic orbit connecting $P$ to $Q$.
\end{itemize}
\end{proposition}

\medskip\noindent
{\it Proof of Proposition \ref{thode}.} \; 
\begin{enumerate}
\renewcommand{\theenumi}{\alph{enumi}}
 \item 
 The system \eqref{final} has the special solution 
$$
\begin{aligned}
&a (\eta)  = 0 ,
\\
&b(\eta)  \quad \mbox{solves the differential equation } \quad 
\frac{db}{d\eta} = 
\tfrac{\alpha}{\nu n} \big  ( c_\nu  b -1  \big ) .
\end{aligned}
$$
This solution splits the phase portrait into the left and right plane and 
acts as a barrier from crossing
from the one side to the other.

\item The equilibria of \eqref{final} are computed by solving
$$
\begin{aligned}
a \big ( 1 - \frac{a^2}{b} \big ) &= 0 ,
\\
c_\nu  b -1 - \tfrac{n+1}{\alpha} \nu a^2 &= 0 ,
\end{aligned}
$$
and they are $P = ( 0,   \tfrac{1}{c_\nu} )$ and $Q  = (1, 1)$ and $R  =  
(-1, 1)$. Focusing on the behavior
in the first quadrant we neglect the equilibrium $R$.

\item A  computation shows
\begin{equation}
\label{gradient}
\nabla F(a, b)=
\begin{pmatrix}
1 - \frac{3a^2}{b} &  \frac{a^3}{b^2} \\
\\[2pt]
-2 \tfrac{n+1}{n} a &   \tfrac{\alpha}{n\nu} c_\nu
\end{pmatrix}
 .
\end{equation}
At the equilibrium $P$ we have
$$
\nabla F \big (0 , \tfrac{1}{c_\nu} \big )=
\begin{pmatrix}
1 &  0 \\
0 &   \tfrac{\alpha}{n\nu} c_\nu
\end{pmatrix} ,
$$
and we have eigenvalues $\lambda_1 = 1$ with eigenvector $r_1 = (1, 0)^T$, and $\lambda_2 = \tfrac{\alpha}{n\nu} c_\nu > 0$
with eigenvector $r_2 = (0,1)^T$. In view of \eqref{defce}, we have 
$\lambda_2>1$. The local behavior of solutions of \eqref{final} around the 
equilibrium $P$ is that of a repelling node.
 Standard phase plane theory for second order systems ({\it e.g.} \cite{hartman}) states that there is a small neighbourhood of the equilibrium $P$ 
 such that for any given solution
$(a(\eta), b(\eta))$ of  \eqref{final} there are constants 
$\kappa_1$, $\kappa_2$ such that 
\begin{equation}
\label{asymptotic}
\begin{pmatrix}
a(\eta)
\\
b(\eta)
\end{pmatrix}
=
\begin{pmatrix}
0
\\
\tfrac{1}{c_\nu}
\end{pmatrix}
+
\kappa_1 
\begin{pmatrix}  1  \\  0  \end{pmatrix}
\, e^{\lambda_1 \eta}
+
\kappa_2 
\begin{pmatrix}  0  \\  1  \end{pmatrix}
\, e^{\lambda_2 \eta}
+ \mbox{ higher order terms}
\end{equation}
as $\eta \to -\infty$.

\item At the point $Q = (1,1)$ equation \eqref{gradient} gives
$$
\nabla F \big ( 1, 1  \big )=
\begin{pmatrix}
-2  &  1  \\
-2 \tfrac{n+1}{n}  &   \tfrac{\alpha}{n\nu} c_\nu
\end{pmatrix} .
$$
The eigenvalues $\lambda_{\pm}$, computed by
\begin{equation}
\label{eigensaddle}
\lambda_{\pm}  =  \frac{ ( \tfrac{\alpha}{n \nu} c_\nu -2 ) \pm \sqrt{ \big ( \tfrac{\alpha}{n \nu} c_\nu -2 \big )^2 + 8 \tfrac{\alpha}{n\nu} } }{2} \, ,
\end{equation}
are $\lambda_- < 0$ with eigenvector $r_- = (1 , 2+\lambda_-)^T$ and  $\lambda_+ > 0$ with
eigenvector $r_+ = (1 , 2+\lambda_+)^T$.   $Q$ is a saddle point.

\item  Consider the region $\mathcal{R}$ bounded by the curves
\begin{itemize}
\item
$l_0$ is the $b$-axis defined by the equation $a=0$,
\item
$l_1$ is the horizontal line $b - 1 = 0$,
\item
$l_2$ is the parabola $b - a^2 = 0$ .
\end{itemize} 
The region $\mathcal{R}$ is expressed as 
$$
\mathcal{R} = \big \{  (a, b) :  a^2 \le b \le 1 \, , \; 0 \le a 
\le 1 \big \} ,
$$
see Figure \ref{fig:phase2}.
\begin{figure}
\centering
\includegraphics[height=3.8cm,width=10cm]{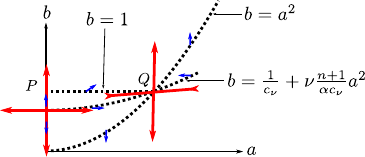}
\caption{Phase diagram of System \eqref{final}}
\label{fig:phase2}
\end{figure}
We will show that $\mathcal{R}$ is negatively invariant.

Indeed, the flow of \eqref{final} is parallel to $l_0$ as indicated in (b). On the horizontal  line $l_1 \cap \mathcal{R}$ the system \eqref{final} gives
$$
\frac{da}{d\eta} = a (1-a^2) > 0 \, , \quad 
\frac{db}{d\eta} = \tfrac{n+1}{n} (1 - a^2) > 0 \, .
$$
On the curve $l_2 \cap \mathcal{R}$ the system \eqref{final} gives
$$
\frac{da}{d\eta} = 0 \, , \quad \frac{db}{d\eta}  = 
\tfrac{\alpha}{\nu n} (a^2 -1)  < 0 \, .
$$
Therefore $\mathcal{R}$ is negatively invariant.

\item For completeness we also show that the vector $r_-$ associated to 
the eigenvalue $\lambda_-$ at $Q$ points 
towards the interior of the region $\mathcal{R}$. Recall that $Q$ is the upper right corner of the region $\mathcal{R}$
and that
\begin{itemize}
\item[] the tangent vector to $l_1$ is $\tau_1 = (1, 0)^T$ ,
\item[] the tangent vector to $l_2$ at $Q = (1,1)$ is $\tau_2 = (1,2)^T$ ,
\item[] the eigenvector $r_- = (1 , 2 + \lambda_-)^T$.
\end{itemize}
We will show that
\begin{equation}
\label{place}
0 < 2 + \lambda_- < 2 \, , \quad \forall \nu > 0 \, .
\end{equation}
This then implies that the vector $-( r_-)$ when placed at the corner $Q$ points strictly towards the interior of $\mathcal{R}$.
To show \eqref{place} recall that $\lambda_- < 0$ and use \eqref{eigensaddle} and \eqref{defce} to rewrite
$$
2 ( \lambda_- + 2) = ( \tfrac{\alpha}{n \nu} c_\nu  +  2 )  - \sqrt{ I},
$$
where
$$
\begin{aligned}
I &:=   ( \tfrac{\alpha}{n \nu} c_\nu -2 )^2 + 8 \tfrac{\alpha}{n\nu} 
\\
&=  ( \tfrac{\alpha}{n \nu} c_\nu  )^2 + 4 + 4 \tfrac{\alpha}{n \nu} -4 \tfrac{n+1}{n}
\\
&<  ( \tfrac{\alpha}{n \nu} c_\nu  )^2 + 4 + 4 \tfrac{\alpha}{n \nu}
\\
&< ( \tfrac{\alpha}{n \nu} c_\nu  + 2  )^2  .
\end{aligned}
$$
This shows  $\lambda_- + 2 > 0$ and concludes \eqref{place}.

\item It is easy to check that there can be no closed limit cycle in 
$\mathcal{R}$ because in that region
$$
\frac{da}{d\eta} = \frac{a}{b} (b - a^2) > 0 \, .
$$
The Poincar\'e-Bendixson theorem implies that there exists a heteroclinic orbit joining $( 0, \tfrac{1}{c_\nu})$ and $(1,1)$
which emanates along the direction of the stable manifold at $(1,1)$ and progresses backwards in time towards the node
at $( 0, \tfrac{1}{c_\nu})$.  \qed
\end{enumerate}

\medskip
{\it Step 4.} We turn now to defining the solution of \eqref{msys}, \eqref{mic}.
 Let $( A(\eta), B(\eta) )$ stand
for a  parametrization of the heteroclinic orbit. In view of 
\eqref{asymptotic} and the fact that $\lambda_2> \lambda_1 = 1$, there exists a 
constant $\kappa_1$ such that
\begin{equation}
\label{limit1}
e^{ - \eta}
\left (  
\begin{pmatrix}
A (\eta)
\\
B(\eta)
\end{pmatrix}
-
\begin{pmatrix}
0
\\
\tfrac{1}{c_\nu}
\end{pmatrix}
\right )
\to 
\kappa_1 
\begin{pmatrix}  1  \\  0  \end{pmatrix} ,
\qquad \mbox{ as $\eta \to -\infty$} \, ,
\end{equation}
Due to the fact that the vertical axis is a trajectory of its own and thus 
distinct from the trajectory we are considering, and due to the form of 
the phase portrait we can assert that $\kappa_1 > 0$.

For any  $\eta_0 \in \R$ the function $(A(\eta + \eta_0) , B(\eta + 
\eta_0) )$ offers an alternative
parametrization of the same heteroclinic orbit. Applying  \eqref{limit1} to this parametrization yields
\begin{equation}
\label{limit2}
e^{ - \eta}
\left ( 
\begin{pmatrix}
A (\eta + \eta_0 )
\\
B(\eta + \eta_0 )
\end{pmatrix}
-
\begin{pmatrix}
0
\\
\tfrac{1}{c_\nu}
\end{pmatrix}
\right )
\to 
\kappa_1  e^{\eta_0}
\begin{pmatrix}  1  \\  0  \end{pmatrix} ,
\qquad \mbox{ as $\eta \to -\infty$} \, .
\end{equation}
Choose $\eta_0$ such that $\kappa_1 \eta_0 = \frac{1}{\Sigma_0}$ and let 
$(a(\eta), b(\eta))$ denote the corresponding  parametrization of 
the heteroclinic :
$$
\begin{pmatrix}
a(\eta)
\\
b (\eta)
\end{pmatrix}
: =
\begin{pmatrix}
A (\eta + \eta_0 )
\\
B(\eta + \eta_0 )
\end{pmatrix} .
$$
The selected parametrization satisfies as $\eta \to -\infty$
\begin{equation}
\label{limit3}
\begin{aligned}
a(\eta) &= \frac{1}{\Sigma_0} e^\eta + o(1) e^\eta ,
\\
b (\eta) &= \frac{1}{c_\nu} + o(1) e^\eta ,
\\
\sigma (\eta) &= \frac{1}{a(\eta)} = \frac{\Sigma_0 e^{-\eta}}{1 + o(1)} = 
\Sigma_0 e^{-\eta} ( 1 + o(1) ) ,
\end{aligned}
\end{equation}
where as usual $o(1) \to 0$ as $\eta \to -\infty$.
Recalling the changes of variables \eqref{indep} and \eqref{changed} we define
\begin{alignat}{2}
U (\xi) &=   \frac{1}{\xi} u ( \log \xi) &&=   \frac{1}{\xi} \, \Big 
( \frac{a}{b} \Big ) (\log \xi) ,
\nonumber
\\
\label{defin}
\Sigma (\xi) &=  \xi \sigma (\log \xi)  &&=  \xi \, \Big ( \frac{1}{a} 
\Big ) (\log \xi) ,
\\
\Theta (\xi) &= - \tfrac{n+1}{\alpha} \log \xi  + \theta (\log \xi) &&
= - \tfrac{n+1}{\alpha} \log \xi + \tfrac{n+1}{\alpha} \big ( \log a \big ) (\log \xi ) 
- \tfrac{n}{\alpha} \big ( \log b \big )(\log \xi) \, .
\nonumber
\end{alignat}
The function \eqref{defin}  clearly satisfies the system \eqref{msys} and,
since $( a(\eta) , b(\eta) ) \to (1,1)$ as $\eta \to \infty$, 
it satisfies 
$$
\begin{aligned}
U(\xi)  &= \frac{1}{\xi} (1  + o (1)  ),
\\
\Sigma (\xi) &= \xi (1 + o (1) ),
\\[4pt]
\Theta (\xi) &=   - \tfrac{n+1}{\alpha} \log \xi + o(1),
\end{aligned}
$$
as $\xi$ goes to infinity, which shows \eqref{behavinfi}.

\medskip
{\it Step 5.} In the last step,  we study the behavior near the (apparent) singular point at $\xi =0$ and prove that the solution
is in fact regular there and satisfies  \eqref{behavior0}, \eqref{limitderiv} and \eqref{taylor}.

Clearly \eqref{limit3} and \eqref{defin}$_2$ imply that $\Sigma (\xi) \to \Sigma_0$ as $\xi \to 0$. Let $U_0$ and $\Theta_0$
be defined by \eqref{defutheta}. Since $u(\eta) = \frac{a}{b} (\eta)$,  
\eqref{limit3}  gives
\begin{equation}
\label{limit4}
u(\eta) = \frac{a}{b} (\eta) = \frac{c_\nu}{\Sigma_0} e^\eta + o(1) 
e^\eta = U_0 e^\eta + o(1) e^\eta \, .
\end{equation}
Now \eqref{defin}, \eqref{limit3}, \eqref{limit4}, \eqref{defutheta} imply
$$
\begin{aligned}
\Theta (\xi)  &=  - \tfrac{n+1}{\alpha} \log \xi + 
\tfrac{n+1}{\alpha} \log \left ( \xi \frac{1}{\Sigma_0} (1 + o(1) ) \right 
)
 - \tfrac{n}{\alpha} \log  \left ( \frac{1}{c_\nu} + o(1) \right )
 \\
 &\to \tfrac{n+1}{\alpha} \log \left (\frac{1}{\Sigma_0}  \right ) 
 - \tfrac{n}{\alpha} \log  \left ( \frac{1}{c_\nu}  \right )  = \Theta_0  
\qquad \mbox{as $\xi \to 0$}.
 \end{aligned}
$$
and together with \eqref{limit4} and \eqref{defin} provide \eqref{behavior0}. 
Observe now that
$$
\frac{dU}{d\xi} (\xi) = \frac{1}{\xi^2} \left( \frac{du}{d\eta} 
- u \right) (\log \xi) \, .
$$
Using \eqref{final}, \eqref{limit3} and \eqref{limit4}, we obtain
$$
\begin{aligned}
\frac{du}{d\eta} - u &= \frac{1}{b} \frac{da}{d\eta} - 
\frac{a}{b^2} \frac{db}{d\eta} - \frac{a}{b}
\\
&= \frac{1}{b} a (1 -\frac{a^2}{b} ) -   \frac{a}{b}  - 
\frac{a}{b^2}  \Big [ \tfrac{\alpha}{\nu n} (c_\nu b -1) - 
\tfrac{n+1}{n} a^2 \Big ]
\\
&= \tfrac{1}{n} u^3 b - u \tfrac{\alpha}{\nu n} 
\frac{c_\nu}{b} \big (b - \frac{1}{c_\nu} \big) 
\\
&= \tfrac{1}{n} \tfrac{U_0^3}{c_\nu} e^{3 \eta}(1 + o(1) )- \tfrac{\alpha}{\nu n}  U_0 o(1) e^{2\eta}  \qquad \mbox{as $\eta \to -\infty$}
\end{aligned}
$$
and thus
\begin{equation}
\label{derivbound}
\Big | \frac{dU}{d\xi} (\xi) \Big |  \le o(1) \qquad \mbox{as $\xi \to 0$}.
\end{equation}
Since $U \in C[0,1] \cap C^1(0,1)$ the mean value theorem implies
$$
\left |  \frac{U(\xi) - U_0}{\xi}  \right | = | U' (\xi^*) | \quad \mbox{for some $\xi^* \in (0, \xi)$},
$$
and \eqref{derivbound} implies  $U'(0)$ exists and $U'(0) = 0$. We conclude that $U \in C^1 [0,\infty)$ has the Taylor expansion \eqref{taylor}$_1$.
In turn,  $\Sigma \in C^2 [0, \infty)$,  $\Sigma' (0) = 0$, $\Sigma''(0) = \tfrac{1}{2} U_0$ and 
$\Sigma$ achieves the Taylor expansion \eqref{taylor}$_2$.

Turning to $\Theta$, observe that by \eqref{limit3} and \eqref{defin}$_3$
$$
\begin{aligned}
\theta (\eta) &= \tfrac{n+1}{\alpha} \log a(\eta) - 
\tfrac{n}{\alpha} \log b (\eta)
\\
&= \tfrac{n+1}{\alpha}  \log \left ( \frac{1}{\Sigma_0} e^\eta (1 + o(1) )
\right )  - \tfrac{n}{\alpha} \log \left ( \frac{1}{c_\nu} (1 + o(1) e^\eta )  
 \right )
\\
&= \tfrac{n+1}{\alpha} \log \frac{1}{\Sigma_0} - \tfrac{n}{\alpha} \log \frac{1}{c_\nu} + \tfrac{n+1}{\alpha}  \log e^\eta  + \tfrac{n+1}{\alpha}  \log ( 1 + o(1)) 
- \tfrac{n}{\alpha} \log \big ( 1 + o(1) e^\eta \big ) 
\\
&= \Theta_0 + \tfrac{n+1}{\alpha} \eta + \tfrac{n+1}{\alpha}  \log ( 1 + o(1))  - \tfrac{n}{\alpha} \log ( 1 + o(1) ) 
\, , 
\qquad \mbox{as $\eta \to -\infty$} \, ,
\end{aligned}
$$
while
$$
\begin{aligned}
\Theta (\xi) &= - \tfrac{n+1}{\alpha} \log \xi + \theta(\log \xi)
\\
&= \Theta_0 + o(1)  \qquad \mbox{as $\xi \to 0$}.
\end{aligned}
$$
In addition,
$$
\Theta'(\xi) = - \tfrac{n+1}{\alpha} \frac{1}{\xi} + \frac{1}{\xi}  \frac{d\theta}{d\eta} ( \log \xi),
$$
combined with the formula
$$
\begin{aligned}
\frac{d\theta}{d\eta} &= \tfrac{n+1}{\alpha} \frac{1}{a} \frac{da}{d\eta} 
- \tfrac{n}{\alpha} \frac{1}{b} \frac{db}{d\eta}
\\
&= \tfrac{n+1}{\alpha} \left( 1 - \frac{a^2}{b} \right ) - \tfrac{1}{\nu} 
\left ( c_\nu - \frac{1}{b} \right ) + \tfrac{n+1}{\alpha} 
\frac{a^2}{b}
\\
&= \tfrac{n+1}{\alpha} - \tfrac{1}{\nu} \frac{c_\nu}{b} \left ( b - 
\tfrac{1}{c_\nu} \right ),
\end{aligned}
$$
gives
$$
\begin{aligned}
\Theta' (\xi) &= - \frac{1}{\xi} \,  \left (  \frac{c_\nu}{b } \big( 
b - \tfrac{1}{c_\nu} \big ) \right ) (\log \xi)
\\
&= - \frac{1}{\xi}  \,  \frac{c_\nu}{ b } \big(\log \xi \big) \,  o(1) \xi
\\
&= o(1) \to 0 \qquad  \qquad \mbox{as $\xi \to 0$} \, .
\end{aligned}
$$
This shows $\Theta \in C^1 [0, \infty)$ has the Taylor expansion \eqref{taylor}$_3$ at the origin.

\end{proof}

\section{Localization - Part II}
\label{seclocalII}

We are ready to state and prove the main result of the paper. This  concerns the adiabatic case $\kappa =0$
of system \eqref{shear}, taken now on the whole real line 
$(x,t) \in \R \times \R^+$ and expressed in
terms of the variable $u=v_x$ in the form:
\begin{equation}
  \label{adiashear} 
  \begin{aligned}
    & u_{t} = \sigma_{x x} ,
    \\
    & \theta_{t} =  \sigma u ,
    \\
    & \sigma  = e^{-\alpha \theta} u^n \, .
  \end{aligned}
\end{equation}
Our analysis is based on the transformations of section \ref{seclocal} and 
the self-similar 
solution constructed in section \ref{secss}.

\begin{theorem}
\label{shearband} 
Let $\theta_0 > 0$ and $\Sigma_0 > 0$ be given. For any $\lambda > 0$, 
the system \eqref{adiashear} admits a special class of solutions describing localization of the form:
\begin{align}
\label{locu}
u(x,t) &= \left ( \tfrac{\alpha}{c_0} t + 1 \right )^{\tfrac{\lambda}{\alpha}} U_\lambda \left ( \sqrt{\lambda}  x  \left ( \tfrac{\alpha}{c_0}  t + 1 \right )^{\tfrac{\lambda}{\alpha}}   \right ), 
\\[2pt]
\label{locsigma}
\sigma (x,t) &= \sigma_s (t)  \left ( \tfrac{\alpha}{c_0}  t + 1 \right )^{ - \tfrac{\lambda}{\alpha}}
\Sigma_\lambda \left ( \sqrt{\lambda}  x  \left ( \tfrac{\alpha}{c_0} t + 1 \right )^{\tfrac{\lambda}{\alpha}}   \right ) ,
\\[2pt]
\label{loctheta}
\theta(x,t) &= \left ( 1 +\lambda \tfrac{n+1}{\alpha} \right ) \theta_s (t) - \lambda \tfrac{n+1}{\alpha} \theta_0 +
\Theta_\lambda \left ( \sqrt{\lambda}  x  \left ( \tfrac{\alpha}{c_0} t + 1 \right )^{\tfrac{\lambda}{\alpha}}   \right ) ,
\end{align}
where $\sigma_s$, $\theta_s$ are the uniform shear solutions in \eqref{expuss}, $c_0 = e^{\alpha \theta_0}$, while $(U_\lambda, \Sigma_\lambda, \Theta_\lambda)(\xi)$
is an even function defined on $\R$, which solves the initial value problem \eqref{msys}, \eqref{mic} with $\nu = \lambda > 0$ for $\xi \in [0,\infty)$.
\end{theorem}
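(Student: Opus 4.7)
The plan is to \emph{verify} the ansatz \eqref{locu}--\eqref{loctheta} by unwinding the chain of reductions assembled in Section \ref{seclocal} and invoking Theorem \ref{connection} for the core ODE analysis. Given $\lambda>0$, let $(U_\lambda,\Sigma_\lambda,\Theta_\lambda)$ denote the solution of the auxiliary problem \eqref{msys}, \eqref{mic} with $\nu=\lambda$ provided by Theorem \ref{connection}; the regularity $U_\lambda,\Theta_\lambda\in C^1[0,\infty)$, $\Sigma_\lambda\in C^2[0,\infty)$, together with the vanishing derivatives \eqref{limitderiv} and the Taylor expansions \eqref{taylor}, are exactly the properties needed to make the even extension to $\mathbb{R}$ compatible with the differential order of \eqref{adiashear}.

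First, I would set $\bU(\xi):=U_\lambda(\sqrt{\lambda}\,\xi)$, $\bS(\xi):=\Sigma_\lambda(\sqrt{\lambda}\,\xi)$, $\bTheta(\xi):=\Theta_\lambda(\sqrt{\lambda}\,\xi)$, and check that this triple solves the profile system \eqref{sysxi}. Writing $\tilde\xi=\sqrt{\lambda}\,\xi$, equations \eqref{sysxi}$_2$ and \eqref{sysxi}$_3$ follow immediately from \eqref{msys}$_2$ and \eqref{msys}$_3$ with $\nu=\lambda$, using $\xi\bTheta'(\xi)=\tilde\xi\,\Theta_\lambda'(\tilde\xi)$. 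For \eqref{sysxi}$_1$, I differentiate \eqref{msys}$_1$ in $\tilde\xi$ to get $\Sigma_\lambda''=(\tilde\xi\,U_\lambda)'$, and then the chain rule delivers $\bS''(\xi)=\lambda\,(\tilde\xi\,U_\lambda)'(\sqrt{\lambda}\,\xi)=\lambda(\xi\bU)'(\xi)$. Note that \eqref{msys}$_1$ is recovered from \eqref{sysxi}$_1$ precisely by one integration using $\Sigma_\lambda'(0)=0$ from \eqref{limitderiv}, so the passage between the two formulations is an equivalence.

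Next I would invert the ansatz \eqref{adavar} with the focusing choice $r(\tau)=e^{-\lambda\tau}$ discussed in Section \ref{secinvariance}. Substituting the profile above produces a triple $(\hU,\hS,\hTheta)(x,\tau)$ that solves the autonomous system \eqref{adia}. Inverting the relative-perturbation transformation \eqref{resctran}, and using $\tau(t)=\theta_s(t)-\theta_0=\frac{1}{\alpha}\log((\alpha t+c_0)/c_0)$ so that $e^{\lambda\tau(t)}=\bigl(\tfrac{\alpha}{c_0}t+1\bigr)^{\lambda/\alpha}$, one reads off \eqref{locu}--\eqref{loctheta} term by term: the factor $1/r$ gives the prefactor and space rescaling in \eqref{locu}; the factor $r$ together with $\sigma_s(t)$ gives \eqref{locsigma}; and the $-\tfrac{n+1}{\alpha}\log r$ contribution produces the linear-in-$\theta_s$ correction $\lambda\tfrac{n+1}{\alpha}(\theta_s(t)-\theta_0)$ in \eqref{loctheta}. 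This verifies \eqref{adiashear} pointwise for $x>0$, $t\geq 0$, and by symmetry for $x<0$.

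The one genuinely delicate point is the passage to the whole line: the auxiliary problem lives on $[0,\infty)$, whereas \eqref{adiashear} is posed on $\mathbb{R}\times\mathbb{R}^+$. I would extend $U_\lambda,\Sigma_\lambda,\Theta_\lambda$ as even functions of $\xi$. The regularity assertions in Theorem \ref{connection}, in particular $U_\lambda'(0)=\Sigma_\lambda'(0)=\Theta_\lambda'(0)=0$, ensure that $U_\lambda,\Theta_\lambda$ remain $C^1$ and $\Sigma_\lambda$ remains $C^2$ across $\xi=0$, which is precisely the regularity demanded by the second-order $x$-derivatives appearing in \eqref{adiashear}. With this extension in place, the verification carried out above on the half-line $x>0$ transfers verbatim to $x\in\mathbb{R}$, and no boundary contribution appears at $\xi=0$ thanks to \eqref{limitderiv}. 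The main obstacle for the whole argument is thus not computational but conceptual: matching the second-order equation \eqref{sysxi}$_1$ to the first-order relation \eqref{msys}$_1$ consistently at the origin, a step that is already encoded in the regularity output of Theorem \ref{connection}.
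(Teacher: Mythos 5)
Your proposal is correct and follows essentially the same route as the paper: you unwind the change of variables \eqref{resctran}, \eqref{adavar} with $r(\tau)=e^{-\lambda\tau}$, verify that the rescaled profile $(\bU,\bS,\bTheta)(\xi)=(U_\lambda,\Sigma_\lambda,\Theta_\lambda)(\sqrt{\lambda}\,\xi)$ satisfies \eqref{sysxi} via a direct chain-rule computation, and invoke the regularity and vanishing derivatives at $\xi=0$ from Theorem~\ref{connection}(iii) to justify the even extension to the whole line. The only cosmetic difference is that you make explicit the differentiation step relating \eqref{msys}$_1$ to \eqref{sysxi}$_1$, whereas the paper compresses this into ``a direct computation,'' but the content is identical.
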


\begin{remark} \rm
\label{rmkbeh}
\begin{enumerate}
\renewcommand{\theenumi}{\roman{enumi}}
\item[]
\item  Because $\left ( \tfrac{\alpha}{c_0}  t + 1 \right 
)^{\tfrac{\lambda}{\alpha}}$ goes to infinity in time, the level curves of 
$$
\xi (x,t) = \sqrt{\lambda}  x  \left ( \tfrac{\alpha}{c_0}  t + 1 \right )^{\tfrac{\lambda}{\alpha}},
$$ 
accumulate along the $\lbrace x=0\rbrace$ axis in the $(x,t)$ plane. The functions \eqref{locu}, \eqref{locsigma}, \eqref{loctheta} thus describe a localizing
solution  where the flow tends to concentrate around $x=0$ as the time proceeds.

\item  The solution \eqref{locu}, \eqref{locsigma}, \eqref{loctheta} is 
defined on $\R \times \R^+$ and emanates from initial data
\begin{equation}
\label{initialdata}
\begin{aligned}
u(x,0) &=  U_\lambda \big ( \sqrt{\lambda}  x    \big  ) ,
\\[2pt]
\sigma (x,0) &= \sigma_s (0)  
\Sigma_\lambda  \big ( \sqrt{\lambda}  x    \big ) ,
\\[2pt]
\theta(x,0) &=  \theta_s (0)  +
\Theta_\lambda  \big  ( \sqrt{\lambda}  x     \big )  \, .
\end{aligned}
\end{equation}
The initial data depend on two parameters: 
The parameter $\lambda$ which can be thought as a length scale in initial data, and $\Sigma_0$ in \eqref{mic} which describes the amplitude of the initial perturbation. 

\item  This parametric family of solutions is valid for any $\lambda > 0$. 
If $\lambda$ is large then 
the length scale of the initial perturbation is short while  the growth in 
time and the rate of localization for the solution become fast.
\end{enumerate}
\end{remark}

\begin{proof}
We introduce the changes of variables \eqref{resctran} and  the stationary variant of \eqref{adavar}, which once combined together
reads
\begin{equation}
\label{transform}
\begin{aligned}
u (x, t ) &= \frac{1}{r(\tau(t) )} \bU \Big ( \frac{x}{r(\tau(t) ) }  \Big ) ,
\\
\sigma (x, t)  &= \sigma_s (t)  r(\tau(t) ) \, \bS \Big ( \frac{x}{r(\tau(t) ) }  \Big ),
\\
\theta (x, t)  &=  \theta_s (t) - \frac{n+1}{\alpha} \log r(\tau(t) )  + \bTheta \Big ( \frac{x}{r(\tau(t) ) } \Big )  \, .
\end{aligned}
\end{equation}
We select $r(\tau) = e^{-\lambda \tau}$ with $\lambda > 0$. 
According to the analysis in section \ref{secinvariance} the function $(\bU, \bS, \bTheta)(\xi)$ is sought as a solution of
\eqref{sysxi} defined on the interval $(-\infty, \infty)$. 

The desired $(\bU, \bS, \bTheta)(\xi)$ is constructed as follows. First, observe the relation between
the system \eqref{sysxi}  and the system \eqref{msys}.  Let $\Sigma_0$ be fixed, and let 
$(U_\lambda , \Sigma_\lambda, \Theta_\lambda)(\xi)$ be the solution of \eqref{msys} with $\nu = \lambda$ and \eqref{mic},
defined for $\xi \in [0, \infty)$. This solution is well defined by Theorem \ref{connection}.  Using a direct computation,
the smoothness properties 
in (iii) of Theorem \ref{connection},  and \eqref{scaleinv2} we conclude that
\begin{enumerate}
\renewcommand{\theenumi}{\alph{enumi}}
\item
The function  $(\bU, \bS, \bTheta)$ defined by
\begin{equation}
\label{solscale}
\bU (\xi) =  U_\lambda (\sqrt{\lambda} \xi) \, , \quad \bS (\xi) =  \Sigma_\lambda (\sqrt{\lambda} \xi) \, , \quad
\bTheta (\xi) =  \Theta_\lambda (\sqrt{\lambda} \xi) ,
\end{equation}
satisfies \eqref{sysxi} on $\R^+$ and the condition $\bS (0) = \Sigma_0$.
\item If we extend $(\bU, \bS, \bTheta)$  on $(-\infty, 0)$ by setting  
 $(\bU, \bS , \bTheta)(-\xi) = ( \bU, \bS, \bTheta) (\xi)$ for $\xi < 0$, 
the resulting function $\bU, \bTheta \in C^1 (\R)$, $\bS \in C^2 (\R)$ is a 
solution of \eqref{sysxi} defined on the whole real line that is even.
\end{enumerate}
Noting that 
$$
r(\tau (t)) = e^{- \lambda \tau(t)} = \left ( \tfrac{\alpha}{c_0} t + 1 \right )^{ - \tfrac{\lambda}{\alpha}} ,
$$
we combine \eqref{transform} with \eqref{solscale} and \eqref{expuss} to arrive at \eqref{locu}, \eqref{locsigma}, \eqref{loctheta}. 

\end{proof}

In Figure \ref{uqsol} the profiles  of $u(x,t)$ and $\theta(x,t)$ in \eqref{locu} and \eqref{loctheta}, respectively, are drawn at various instances of time from $t=0$ to $t=200$.
To achieve these profiles, we construct numerically the heteroclinic orbit for  \eqref{final} whose existence is justified in Proposition \ref{thode}. 
The construction is simple, if one exploits the special properties of system \eqref{final}, 
and proceeds as follows: Initial conditions $(a_0,\ b_0)$ are selected close to the equilibrium $Q = (1,1)$  inside the region $\mathcal{R}$ 
and in the direction of the stable manifold of $Q$ for  the linearized problem. Then  \eqref{final} is solved backward in time, using an ode solver, and since $P = (0, 1/c_\nu) $ is
an unstable node (for the forward problem)  the resulting orbit is a good approximation of the heteroclinic. 
For the  integration of  \eqref{final} we use an explicit Runge--Kutta predictor-corrector method of order (4,5) with adaptive time stepping. 
Then  $\Sigma_0$ is selected by choosing  an appropriate parametrisation $\eta_0$ following the construction in \eqref{limit2}.
\begin{figure}[h]
\centering
\includegraphics[height=6cm,width=8cm]{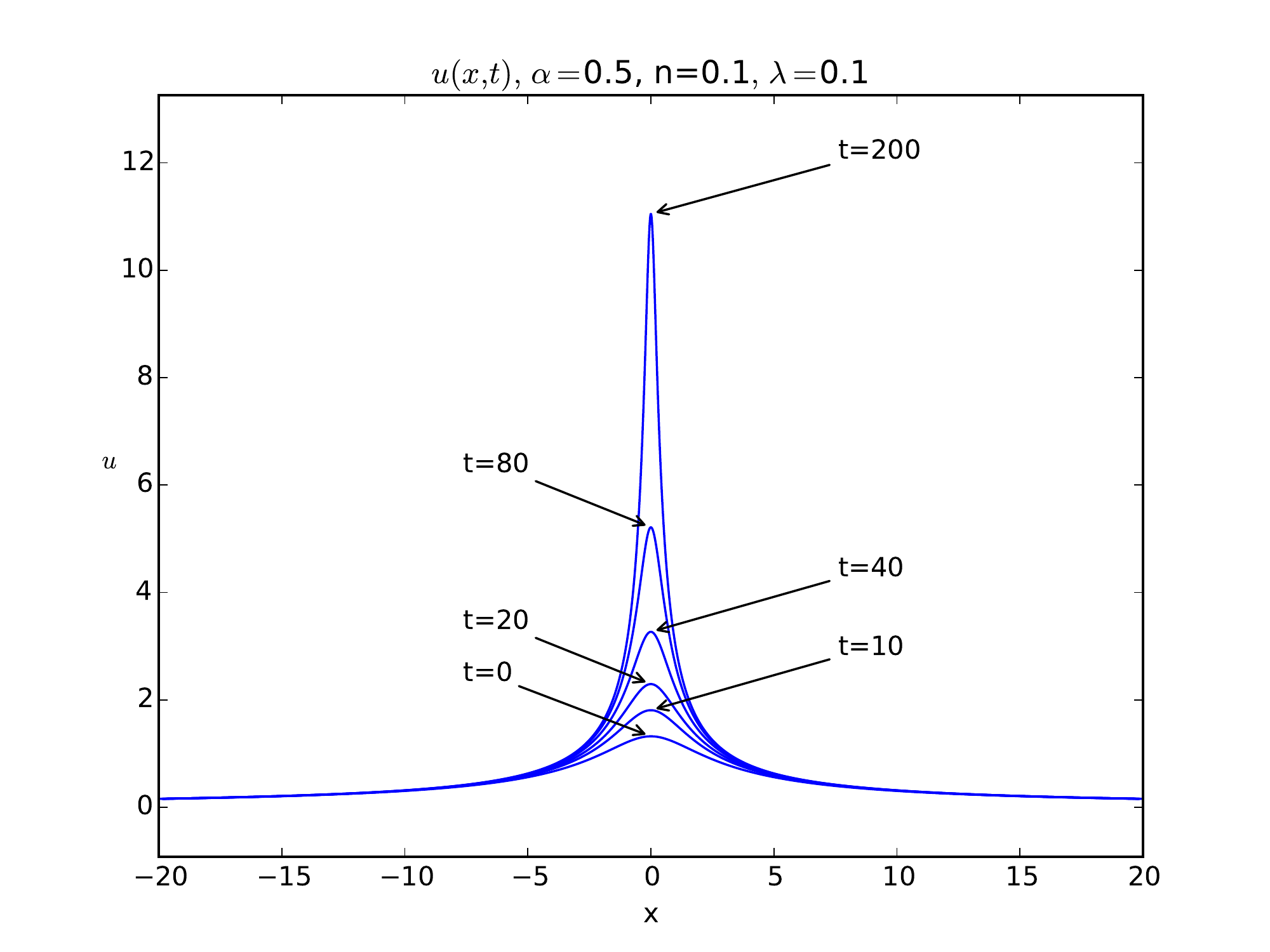}
\includegraphics[height=6cm,width=8cm]{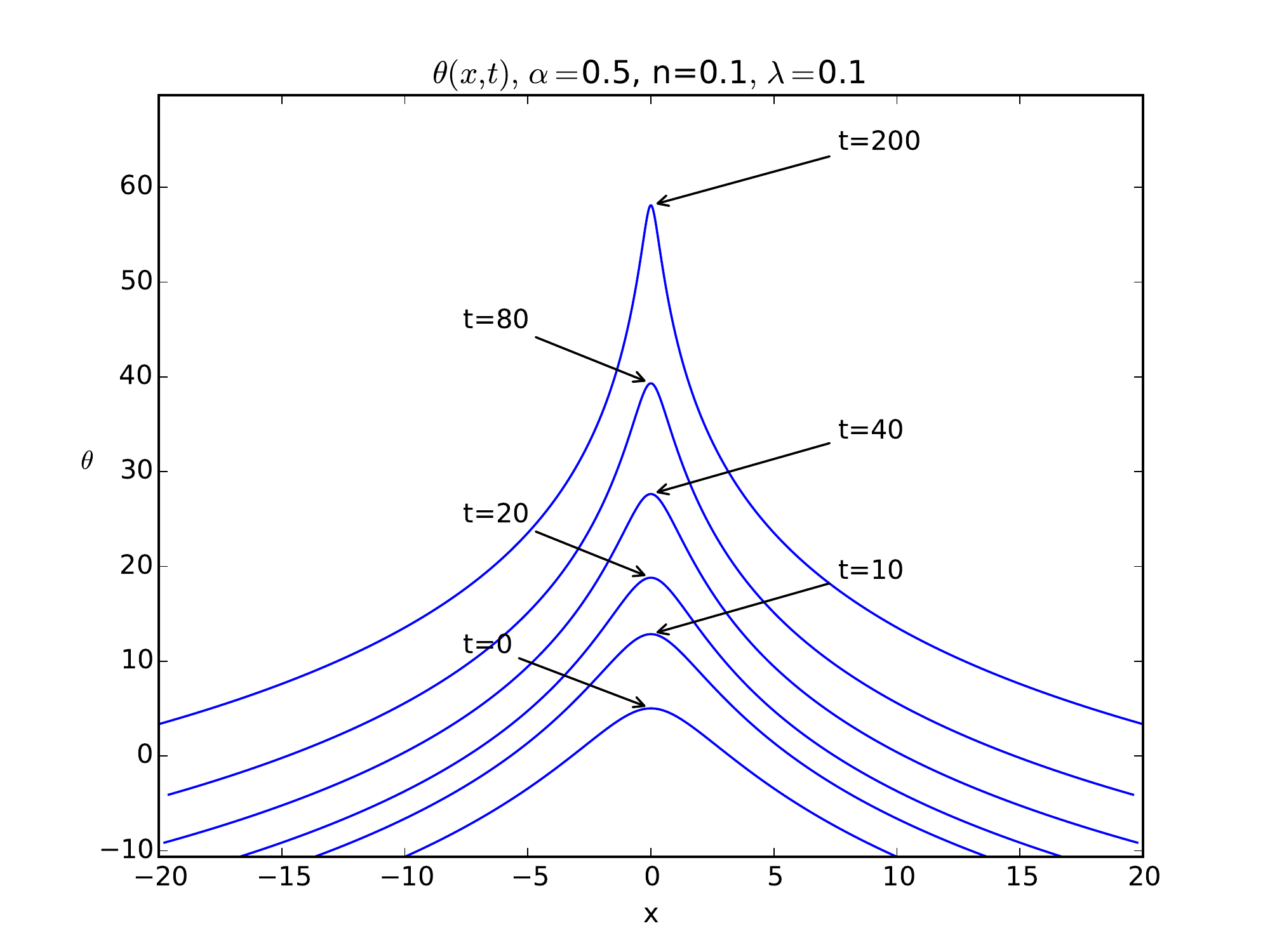}
\caption{The solution $u(x,t)$, \eqref{locu} and  $\theta(x,t)$, \eqref{loctheta}.}
\label{uqsol}
\end{figure}

For the numerical runs the constitutive parameters are $n = 0.1$, $\alpha = 0.5$. The parameters related to the solution are $\theta_0=10$,  $\lambda = 0.1$ while for the
selected reparametrization of the heteroclinic orbit the value of $\Sigma_0=1.88$.  Once the heteroclinic has been constructed, 
the profiles $U_{\lambda}, \ \Theta_{\lambda}$ are computed via the changes of variables \eqref{changed} and \eqref{indep}. Finally,  $u(x,t)$ and $\theta(x,t)$ are computed by \eqref{locu}, \eqref{loctheta} respectively with $\theta_0=10$, and are presented in Fig \ref{uqsol}. 


\section{Summary - Conclusions}
\label{summary}

We considered the system \eqref{shear} as a paradigm to study the onset of localization and formation of shear bands
and to examine the role of thermal softening, strain-rate hardening and thermal diffusion in their development.
The stability of the uniform shear solution \eqref{expuss} leads, in general,  to the analysis of a non-autonomous system. 
The present  model has the remarkable property that the system of relative perturbations \eqref{rescaled} is autonomous 
in the adiabatic case ($\kappa=0$).
A study of linearized stability for the case with no thermal diffusion  $\kappa =0$ shows that: 
(i) For $n = 0$ the high frequency modes grow exponentially fast,  with the exponent proportional to the frequency,
what is characteristic of ill-posedness and  Hadamard instability. 
(ii) When $n > 0$ the modes are still unstable but the rate of growth is bounded independently of the frequency.
The behavior in that range is that characteristic of Turing instability.
The effect of thermal diffusion ($\kappa > 0$) for the 
non-autonomous linearized model \eqref{introlinrelexp} was  also assessed; it was shown that perturbations
grow in the early stages of deformation,  but over time  the compound effect of diffusion
stabilizes the response of the system.

Then we turned to the nonlinear system \eqref{adia} and constructed a class of self-similar solutions on the real line, leading to the explicit
solutions \eqref{locu}, \eqref{locsigma}, \eqref{loctheta}. Their construction is based on  the self-similar profiles of \eqref{msys}, \eqref{mic}, 
in turn determined by a suitable heteroclinic connection. They depend on three parameters: $\theta_0$ linked to the  uniform shear in \eqref{ARUSS}, 
a parameter $\lambda$ which can be viewed  as a length scale of initial data, and  $\Sigma_0$ determining the size of the initial profile. 
The solutions presented in Fig \ref{uqsol}  evidently exhibit localisation:  as time increases a coherent structure forms with the deformation
localizing in a narrow band. In this self-similar solution there is no trace of the oscillations.
A comparison with the modes of the linearized problem indicates that the nonlinearity suppresses
the oscillations and a seamless coherent structure emerges as the net outcome of Hadamard instability, rate dependence, and the
nonlinear structure of the problem.


\end{document}